\newtheorem{thm}{Theorem}[section]
\newtheorem{lem}[thm]{Lemma}
\newtheorem{prop}[thm]{Proposition}
\theoremstyle{definition}
\newtheorem{defn}[thm]{Definition}
\theoremstyle{remark}
\newtheorem{rem}[thm]{Remark}
\numberwithin{equation}{section}
\newcommand{\R}{\mathbb R}
\newcommand{\be}{\begin{equation}}
\newcommand{\ee}{\end{equation}}
\newcommand{\eps}{\epsilon}
\newcommand{\p}{\partial}
\newcommand{\comment}[1]{}
\begin{document}

\title[Two-phase problems and the Bellman equation in 2D]{Two-phase anisotropic free boundary problems and applications to the Bellman equation in 2D}

\author{L. Caffarelli}
\address{Department of Mathematics, University of Texas at Austin, Austin, TX 78712, USA}\email{\tt caffarel@math.utexas.edu}
\author{D. De Silva}
\address{Department of Mathematics, Barnard College, Columbia University, New York, NY 10027, USA}
\email{\tt  desilva@math.columbia.edu}
\author{O. Savin}
\address{Department of Mathematics, Columbia University, New York, NY 10027, USA}\email{\tt  savin@math.columbia.edu}
\thanks{L.~C. is supported by NSF grant DMS-1500871. D.~D. is supported by NSF grant DMS-1301535.  O.~S.~is supported by  NSF grant DMS-1200701.}

\maketitle

\begin{abstract} We prove Lipschitz continuity of solutions to a class of rather general two-phase anisotropic free boundary problems in 2D and we classify global solutions. As a consequence, we obtain $C^{2,1}$ regularity of solutions to the Bellman equation in 2D.
\end{abstract}
\section{Introduction}

One of the basic fully nonlinear 2nd order PDE is the Bellman equation, which is the equation of dynamic programming for certain optimally controlled stochastic systems. In the case of two operators the equation reads:
\begin{equation}\label{BE}F(D^2 v) := Min\{L_1 v, L_2 v\}=0,\end{equation}
where $L_i$ are constant coefficient elliptic operators
$$L_i v : = tr(A_i D^2v), \quad i=1,2 \quad \lambda I \leq A_i \leq \Lambda I.$$

The operator $F$ is concave and solutions to \eqref{BE} satisfy the $C^{2,\alpha}$ interior estimates of Evans and Krylov (\cite{E,K}). 
In fact, the question of interior regularity for problem \eqref{BE} in the case of  two operators was first settled by Brezis and Evans in \cite{BE}.

The motivation for this note was to investigate further regularity of $v$ and qualitative properties of  the free boundary 
$$\Gamma:=\{ x \in B_1 | \quad L_1v(x)=L_2v(x)\},$$
which is a closed set in $B_1$ due to the H\"older continuity of $D^2v$.

Our main results hold in 2D and they read as follows.

\begin{thm}\label{T0} Let $v$ satisfy \eqref{BE} in $B_1 \subset \R^2.$
Assume $0 \in \Gamma$. Then one of the following alternatives holds.

1) $\Gamma$ is a smooth curve in a neighborhood of $0$ and in this case $v$ is $C^{2,1}$ near 0 and has an expansion
$$v(x)=Q + \gamma \left((x \cdot \nu)^+ \right)^3 + O(|x|^{3+\alpha}) ,$$
with $Q$ a third order polynomial and $\nu$ the normal to $\Gamma$ at $0$.

2) $v$ is pointwise $C^{3,\alpha}$ at $0$ i.e. has an expansion
    $$v=P + O(|x|^{3+\alpha}), $$
    with $P$ a polynomial of degree 2 (all third derivatives vanish at 0).
\end{thm}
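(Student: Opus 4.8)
The plan is to reduce Theorem~\ref{T0} to the two-phase anisotropic free boundary problem analyzed above and then carry out a blow-up analysis at $0$. By the Evans--Krylov estimates $v\in C^{2,\alpha}_{loc}(B_1)$, so $D^2v\in C^\alpha$ and $\Gamma$ is closed; after a linear change of coordinates we may assume $A_1=I$, and we may assume $A_1$ is not a multiple of $A_2$ (otherwise $L_1\equiv c\,L_2$, \eqref{BE} is linear and the conclusion is classical). Since $0\in\Gamma$ we have $L_1v(0)=L_2v(0)=0$, hence the second order Taylor polynomial $P$ of $v$ at $0$ satisfies $L_1P=L_2P=0$, so $u:=v-P$ again solves \eqref{BE}, with $u(0)=0$, $\nabla u(0)=0$, $D^2u(0)=0$; it suffices to describe $u$ near $0$. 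Writing $\Omega_i:=\{L_iv<L_{3-i}v\}$ (open sets), $u$ solves the constant coefficient equation $L_iu=0$ in $\Omega_i$ and is therefore real-analytic there, while on $\Gamma=B_1\setminus(\Omega_1\cup\Omega_2)$ both $L_1u=0$ and $L_2u=0$, i.e. $D^2u$ takes values on the fixed line $\ell_0:=\{M\ \mathrm{symmetric}:\operatorname{tr}(A_1M)=\operatorname{tr}(A_2M)=0\}$.

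To exhibit the free boundary structure I would introduce the auxiliary function $\psi:=\operatorname{tr}\big((A_1-A_2)D^2u\big)=L_1u-L_2u$. It is $C^\alpha$, vanishes at $0$ and on $\Gamma$, is positive in $\Omega_2$, negative in $\Omega_1$, and (the operators having constant coefficients and commuting) satisfies $L_2\psi=0$ in $\{\psi>0\}=\Omega_2$ and $L_1\psi=0$ in $\{\psi<0\}=\Omega_1$; the remaining transmission condition across $\Gamma$ is inherited from the requirement that $\psi$ be a trace of $D^2u$ for a $C^{2,\alpha}$ function, equivalently that the other components of $D^2u$ extend continuously across $\Gamma$. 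Thus $\psi$ — or, equivalently, the holomorphic second-derivative quantity $\partial_z^2u$ in $\Omega_1$ together with its $A_2$-adapted counterpart in $\Omega_2$, glued along $\Gamma$ — is a solution of the two-phase anisotropic free boundary problem treated in the previous sections, and we may invoke the Lipschitz a priori estimate and the classification of global solutions obtained there.

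For the blow-up, set $u_r(x):=u(rx)/\|u\|_{L^\infty(B_r)}$ and rescale $\psi$ accordingly. If $u\equiv0$ near $0$ we are already in alternative 2 (with $\Gamma$ a full neighborhood of $0$), so assume $u\not\equiv0$ near $0$. Using the Lipschitz and nondegeneracy estimates for the free boundary problem together with interior estimates in each phase, the family $u_r$ is precompact in $C^2_{loc}$, and along a subsequence $u_r\to u_0$, a nontrivial global solution of \eqref{BE} with $u_0(0)=|\nabla u_0(0)|=|D^2u_0(0)|=0$. By the classification of global solutions, $u_0$ is homogeneous, and it is either (i) of half-space cubic type, $u_0=\gamma\big((x\cdot\nu)^+\big)^3+Q_3$ with $Q_3$ a homogeneous cubic polynomial, $\gamma\ne0$, and $\{x\cdot\nu=0\}$ its free boundary; or (ii) homogeneous of degree at least $3+\alpha$, so that its third order jet at $0$ vanishes. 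A nonzero homogeneous cubic polynomial cannot occur, since it would force $Q_3\in\ker L_1\cap\ker L_2$, which reduces to $\{0\}$ once $A_1$ is not a multiple of $A_2$ (the same linear algebra forbidding single-phase cubic blow-ups).

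It remains to turn each alternative into the theorem. In case (i), the convergence $u_r\to u_0$ makes $u$ flat at small scales around $0$; feeding this into the improvement-of-flatness iteration of the free boundary regularity theory (flatness at one scale implying more flatness at the next, with a geometric gain) shows that $\Gamma$ is a $C^{1,\beta}$ curve near $0$, tangent to $\{x\cdot\nu=0\}$, and that $u=\gamma((x\cdot\nu)^+)^3+O(|x|^{3+\alpha})$ after correcting the third order polynomial part; a Schauder bootstrap for the transmission problem in the now-regular two phases upgrades $\Gamma$ to a smooth curve, and adding back $P$ gives alternative 1 with $Q$ the resulting third order polynomial. In case (ii), every dyadic blow-up of $u$ at $0$ has vanishing third order part, so a standard pointwise Schauder-type iteration — using the $C^\alpha$ bound on $D^2v$ together with $L_iu=0$ in each phase — yields $\|u\|_{L^\infty(B_r)}\le C r^{3+\alpha}$, so that all third derivatives of $v$ vanish at $0$ and $v=P+O(|x|^{3+\alpha})$, which is alternative 2. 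The main obstacle is precisely the input we are allowed to assume — the Lipschitz estimate and the classification of global solutions for the anisotropic problem — and it is here that the two-dimensionality is essential; within the present reduction the delicate points are verifying that the Bellman free boundary genuinely fits that framework and handling the dichotomy cleanly, in particular excluding blow-up homogeneities in $(2,3)$ and promoting the crude $o(|x|^3)$ decay in case (ii) to the sharp $O(|x|^{3+\alpha})$ rate.
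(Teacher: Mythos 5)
Your reduction is essentially the paper's: your $\psi=L_1u-L_2u$ is, after the paper's normalization $L_1=\Delta$, $L_2=\partial_{11}+m\,\partial_{22}$, just $(1-m)v_{22}$, and the paper's Proposition \ref{P1} shows exactly that $v_{22}$ solves the two-phase problem \eqref{TP} with $G(b,\nu)=(1+(m-1)\nu_2^2)b$. But two points in your argument are genuine gaps. First, the free boundary condition is not ``inherited from the requirement that $\psi$ be a trace of $D^2u$'': that heuristic does not produce the specific viscosity condition $u_\nu^+=(1+(m-1)\nu_2^2)u_\nu^-$ at points where $\Gamma$ has no a priori regularity. The paper proves it by regularizing $F$ to $F_\eps$, solving an ODE for one-dimensional profiles $g_\eps$, and building explicit comparison sub/supersolutions with spherical free boundary that are touched against $v$ in the viscosity sense. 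Without some such construction the identification with \eqref{TP} is not established.

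Second, and more seriously, your central step — precompactness of $u_r=u(rx)/\|u\|_{L^\infty(B_r)}$ in $C^2_{loc}$, homogeneity of the blow-up $u_0$ of $v$ itself, and a dichotomy ``half-space cubic type'' versus ``homogeneous of degree $\ge 3+\alpha$'' — invokes a classification of global solutions of the Bellman equation that is proved nowhere and is not the content of Theorem \ref{T2}. Theorem \ref{T2} classifies blow-ups of the \emph{derivative-level} function $u=v_{22}$ (they are two-plane solutions or one-phase), one order below what you need; homogeneity of blow-ups of $v$ would require a monotonicity formula which is unavailable here, and the normalization by $\|u\|_{L^\infty(B_r)}$ need not converge to a nontrivial limit without a doubling estimate. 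The paper bypasses all of this: it first upgrades Theorem \ref{T2} to the uniform pointwise decay $|v_{22}-p_{x_0}|\le C|x-x_0|^{1+\alpha}$ at every point (Proposition \ref{p2} and Remark \ref{r2}, via a compactness--iteration using the flatness-implies-regularity Proposition \ref{p1}), and then integrates up by writing $L_2v=(m-1)(v_{22})^+$, subtracting the explicit cubic $q=\gamma[((x-z)\cdot\nu)^+]^3$ with $L_2q=\sigma((x-z)\cdot\nu)^+$, and applying pointwise Schauder estimates to $v-q$. This last step is also what yields the sharp $O(|x|^{3+\alpha})$ rate in your case (ii), which you acknowledge but do not obtain: ``every blow-up has vanishing cubic part'' gives only $o(|x|^3)$, not $O(|x|^{3+\alpha})$, without the quantitative geometric decay of Proposition \ref{p2}.
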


The coefficients of $Q$, $P$ (and $\gamma$) and the constant in $O(|x|^{3+\alpha})$ are bounded by $C \|v\|_{L^\infty(B_1)}$ with $C$ a constant depending only on $\lambda, \Lambda$.
As a consequence we obtain the optimal regularity of solutions to Bellman equation in 2D.

\begin{thm}
Assume $v$ satisfies \eqref{BE} and $|v| \le 1$ in $B_1 \subset \R^2$. Then
$$\|v\|_{C^{2,1}(B_{1/2})} \le C (\lambda,\Lambda).$$
\end{thm}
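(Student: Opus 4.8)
\medskip

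\noindent The plan is to reduce the $C^{2,1}$ bound to a scale-invariant polynomial approximation estimate, which I would then verify by combining Theorem~\ref{T0} at free boundary points with the interior regularity of the linear operators $L_i$ away from $\Gamma$. Write $\mathcal P_2$ for the space of polynomials of degree $\le 2$ in $\R^2$. I claim it is enough to produce universal $M, r_0$ with
\[
\inf_{P\in\mathcal P_2}\norm{v-P}_{L^\infty(B_\rho(x_0))}\le M\rho^3\qquad\text{for all }x_0\in B_{1/2},\ 0<\rho\le r_0 ,
\]
since the classical Campanato characterization of $C^{2,1}$ then yields $\norm{v}_{C^{2,1}(B_{1/2})}\le C\big(M+\norm{v}_{L^\infty(B_1)}\big)\le C(\lambda,\Lambda)$. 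Throughout I would use that $\Gamma$ is closed (as recalled in the introduction) and that $v$ solves a single constant coefficient uniformly elliptic equation $L_iv=0$ --- hence is analytic --- on each connected component of $B_1\setminus\Gamma$, one of $L_1v,L_2v$ being strictly smaller there.

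\medskip

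\noindent\emph{The estimate on $\Gamma$.} Fix $z\in\Gamma\cap B_{1/2}$. Applying Theorem~\ref{T0} to the rescaled solution $w(y):=v(z+\tfrac14 y)$ --- which again solves \eqref{BE} in $B_1$, satisfies $\norm{w}_{L^\infty(B_1)}\le1$, and has $0\in\Gamma_w$ --- and translating back, one gets the expansions of Theorem~\ref{T0} centered at $z$ on a \emph{universal} ball $B_{r_0}(z)$, with all coefficients and the error constant bounded by $C(\lambda,\Lambda)$ (this is where the remark following Theorem~\ref{T0} is used, together with the scale invariance of \eqref{BE}). In alternative~2 the claim holds with $P$ the quadratic in $v=P+O(|x-z|^{3+\alpha})$. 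In alternative~1, $v=Q+\gamma((x\cdot\nu)^+)^3+O(|x-z|^{3+\alpha})$ with $Q$ cubic; here one checks that on $B_\rho(z)$ the cubic part of $Q$ is $O(\rho^3)$ and the positively homogeneous cubic $t\mapsto\gamma(t^+)^3$, $t=x\cdot\nu$, differs from its best quadratic (a function of $x\cdot\nu$ alone) by $O(|\gamma|\rho^3)$, so again the claim holds. With the coefficients universally bounded, this gives the estimate at every point of $\Gamma\cap B_{1/2}$.

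\medskip

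\noindent\emph{Propagation off $\Gamma$ and conclusion.} For $x_0\in B_{1/2}\setminus\Gamma$ let $R:=\mathrm{dist}(x_0,\Gamma)$ and pick $z\in\Gamma$ with $|x_0-z|=R$. If $\rho\ge R$, then $B_\rho(x_0)\subset B_{2\rho}(z)$, and the estimate at $x_0$, radius $\rho$, reduces to the previous step at $z$, radius $2\rho$. If $\rho<R$, then $B_R(x_0)\cap\Gamma=\emptyset$, so $v$ solves a single $L_iv=0$ there; subtracting the quadratic part $\widetilde P$ of the $z$-expansion --- which is a null solution of this $L_i$, since $z\in\Gamma$ forces $tr(A_1D^2v(z))=tr(A_2D^2v(z))=0$ --- one sees that $v-\widetilde P$ solves $L_i(v-\widetilde P)=0$ in $B_R(x_0)$ and, by the previous step, $\norm{v-\widetilde P}_{L^\infty(B_R(x_0))}\le CR^3$. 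Interior estimates then give $\norm{D^3v}_{L^\infty(B_{R/2}(x_0))}=\norm{D^3(v-\widetilde P)}_{L^\infty(B_{R/2}(x_0))}\le C$, and a Taylor expansion at $x_0$ yields the claim for $\rho\le R/2$; the band $R/2\le\rho<R$ is handled once more by inclusion in a ball centered at $z$. After adjusting the constants, the displayed estimate holds on all of $B_{1/2}$, and the theorem follows.

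\medskip

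\noindent\emph{Main difficulty.} The real content is entirely in Theorem~\ref{T0} --- the dichotomy that at a free boundary point $\Gamma$ is either a genuine smooth curve carrying the one-sided cubic $\gamma((x\cdot\nu)^+)^3$, or all third-order terms vanish. This is exactly what promotes the best-quadratic error from the $o(\rho^2)$ afforded by $C^{2,\alpha}$ to the $O(\rho^3)$ that \emph{is} $C^{2,1}$, and the exponent $3$ is precisely the fixed exponent of the iteration above, which is why the argument closes. Given Theorem~\ref{T0}, the remaining points --- that the expansions hold on a universal ball with universally bounded constants, and the bookkeeping of the interpolation near $\Gamma$ between the free boundary expansion and the interior estimates --- are routine, so I expect the difficulty to be concentrated entirely in the already-granted Theorem~\ref{T0}.
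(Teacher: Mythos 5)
Your proof is correct and takes essentially the same route as the paper, which states this theorem as an immediate consequence of Theorem \ref{T0} together with the universal bounds on the coefficients of $P$, $Q$, $\gamma$ and on the constant in $O(|x|^{3+\alpha})$. The Campanato reduction and the propagation off $\Gamma$ via interior estimates for the constant-coefficient operators are exactly the routine bookkeeping the paper leaves implicit.
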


Our strategy consists in viewing this problem as a so-called two phase free boundary problem. We obtain regularity results for such type of problems that translate in our main Theorems above. These results are in fact the core of our paper and they  are interesting in their own. 

Precisely, 
assume that $u: B_1 \to \R$, $B_1 \subset \R^2$ satisfies
\begin{equation}\label{TP}
\left \{
\begin{array}{lr}
 L_1 u=0 \quad \quad \mbox{in} \quad B_1^+(u):=\{u>0\}, \\
 L_2 u=0 \quad \quad \mbox{in} \quad B_1^-(u):=\{ u \le 0\}^\circ, \\
 u_\nu^+=G(u_\nu^-,\nu) \quad \quad \mbox{on} \quad F(u):=\p B_1^+(u) \cap B_1,
\end{array}
\right.
 \end{equation}
where the free boundary condition is understood in the viscosity sense (see Section 2 for the precise definition) and $u_\nu^+$ and $u_\nu^-$ denote the normal derivatives of $u^+$ and $u^-$ in the inward
direction to $B_1^+(u)$ and $B_1^-(u)$ respectively.
Here $$L_i u(x)= tr(A_i(x)D^2 u), \quad 0<\lambda I \leq A_i (x)\leq \Lambda I \quad \text{in $B_1$}$$ with $A_i$ H\"older continuous.  The function $G: \R^+ \times S^1 \to \R^+$ is continuous and it satisfies the usual ellipticity assumption
$$\mbox{ $G(b,\nu)$ is strictly increasing in $b$} $$ 
and $G(b,\nu) \to \infty$ as $b \to \infty$ uniformly in $\nu$, i.e
\begin{equation}\label{omega}G(b,\nu) \ge \omega(b), \quad \quad \mbox{with} \quad \lim_{b \to \infty} \omega(b)=\infty.\ee

Our first main result gives the Lipschitz continuity of $u.$

\begin{thm}\label{TLip}
Let $u$ be as above. Then
$$\|\nabla u\|_{L^\infty(B_{1/2})} \le C\left(\lambda,\Lambda,\omega, \|u\|_{L^\infty(B_1)}  \right).$$
\end{thm}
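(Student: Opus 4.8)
My plan is to argue by contradiction and compactness, following the classical strategy for two‑phase problems but exploiting the 2D structure heavily. Suppose the gradient bound fails: then there is a sequence of solutions $u_k$ in $B_1$ with $\|u_k\|_{L^\infty}\le M$ but $\sup_{B_{1/2}}|\nabla u_k|\to\infty$. Pick points $x_k\in B_{1/2}$ and radii $d_k$ (distance to $\partial B_{3/4}$, say) realizing the blow‑up of a suitable weighted gradient quantity, and rescale: set $u_k$ near $x_k$ to $\tilde u_k(x)=u_k(x_k+r_k x)/(r_k \ell_k)$ where $\ell_k\to\infty$ is the Lipschitz seminorm at scale $r_k$ and $r_k$ is chosen so that, by a Morrey/interior‑estimate telescoping argument, the gradient growth is "almost attained" at the unit scale. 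The key analytic inputs are: (i) interior elliptic estimates for $L_i$ on each phase to control $u_k$ away from the free boundary; (ii) a Harnack‑type inequality at the free boundary for nonnegative solutions vanishing there (a two‑phase version), which is where the ellipticity assumption on $G$ enters through $\omega$; and (iii) the fact that after the dichotomy rescaling, the coefficients $A_i(x_k+r_kx)$ converge locally uniformly to constant matrices, so in the limit we get constant‑coefficient operators.

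The heart of the argument is the classification of the resulting global solution. After normalization the blow‑up limit $u_0$ should be a global two‑phase solution in $\R^2$ with linear growth: $L_1 u_0=0$ in $\{u_0>0\}$, $L_2 u_0=0$ in $\{u_0\le 0\}^\circ$, with constant coefficients, the free boundary condition $(u_0)^+_\nu = G_0((u_0)^-_\nu,\nu)$ for some limiting $G_0$ (possibly degenerate — one must be careful: $\omega(b)\to\infty$ prevents $u^-_\nu$ from blowing up while $u^+_\nu$ stays bounded, and symmetrically; if $u^-\equiv 0$ in the limit the condition becomes one‑phase), and a normalization forcing $|\nabla u_0|\le 1$ with equality (or near‑equality) attained somewhere. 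In 2D such a global solution, being harmonic‑type in each phase with a flat free boundary forced by the linear growth, must be a two‑plane solution $u_0 = \alpha (x\cdot\nu)^+ - \beta (x\cdot\nu)^-$ with $\alpha=G_0(\beta,\nu)$; this contradicts the blow‑up normalization (the gradient at the designated point cannot exceed $\max(\alpha,\beta)$, which is finite and controlled). To make the flatness of the free boundary rigorous I would use the 2D topological fact that a harmonic function in a half‑plane‑like domain with linear growth and one sign forces the domain to be a half‑plane, together with a monotonicity formula of Alt–Caffarelli–Friedman type (valid in 2D for this class), whose finiteness of the limit at infinity is guaranteed precisely by the linear growth.

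The main obstacle I anticipate is twofold. First, setting up the blow‑up correctly so that the limit is genuinely global (nondegenerate on both phases, or cleanly one‑phase) rather than trivial — this requires the delicate choice of $x_k$ and $r_k$ via a dimensional‑analysis "worst point" selection, combined with a nondegeneracy estimate from below near the free boundary that is itself a consequence of the free boundary condition and must be established first (likely: $u$ is nondegenerate in $B_1^+(u)$ because if it were too small near a free boundary point, the viscosity free boundary condition together with $\omega(b)\to\infty$ would be violated). Second, justifying passage to the limit in the viscosity free boundary condition under varying coefficients $A_i(x_k+r_kx)$ and varying nonlinearity $G$ — this is a stability statement for viscosity solutions of \eqref{TP} that I would prove via comparison with the explicit two‑plane subsolutions/supersolutions, using that $G$ is continuous and monotone. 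The 2D hypothesis is essential in the global classification step and cannot be removed without further flatness assumptions.
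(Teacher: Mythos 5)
Your blow-up-and-classify scheme has a genuine gap at its core, and it is precisely the gap that the paper's argument is designed to avoid. First, the Alt--Caffarelli--Friedman monotonicity formula you invoke to force flatness of the limiting free boundary is not available here: even after blow-up the two phases are governed by two \emph{different} constant-coefficient operators $tr(A_1D^2\cdot)$ and $tr(A_2D^2\cdot)$, which cannot simultaneously be reduced to the Laplacian by a single change of variables, and no ACF-type formula is known in that anisotropic setting (this is exactly why the paper states that the two-operator case is more delicate and that no Lipschitz results existed in the literature). Second, the classification of global linear-growth solutions as two-plane solutions, which you treat as a known ``2D topological fact,'' is essentially the content of Theorem \ref{T2} of the paper, whose proof is long (the two-components lemma, the single-component Lemma \ref{l3}, a unique continuation lemma, and the open mapping theorem) and which the paper derives \emph{after} and \emph{using} the Lipschitz bound; using it to prove the Lipschitz bound risks circularity unless you independently establish compactness of your rescaled sequence, which is the whole difficulty. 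Third, the nondegeneracy you propose to extract from the free boundary condition is not available: the viscosity condition is only imposed where $u_\nu^->0$, and nothing prevents $u^+$ from degenerating (indeed the alternative $(u^*)^-\equiv 0$, and even $u^*\equiv 0$ for homogeneous $G$, occurs). Finally, the existence of degree-less-than-one homogeneous solutions in $\R^3$ shows that any successful argument must use dimension two in a structural way at the level of the original solution, not only in the limit.

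The paper's proof is entirely different and much more elementary: it assumes $|\nabla u(x_0)|\gg 1$ at a point off the zero set, takes the tangent two-plane solution $p$ at $x_0$ (with slopes $a,b\gg 1$ thanks to $\omega(b)\to\infty$), and applies a 2D topological lemma asserting that $\{u<p\}$ has two connected components emanating from $x_0$ that must exit $B_1$. Since $|u|\le 1$ while $p$ grows with slope $\gg 1$, one of these components is trapped in a thin strip around $\{p=0\}$; a polygonal line inside it is used to glue $u$ on one side with $p$ on the other, producing a continuous supersolution $w$ with $|w-ax_2^+|\le 2$. An explicit strict subsolution $\Psi=\omega(\gamma)\varphi^+-\gamma\varphi^-$ built from $\varphi=x_2-x_1^2+C(x_2-x_1^2)^2$ can then be slid to touch $w$ from below at an interior point, a contradiction. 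This avoids any compactness, any monotonicity formula, and any classification of global solutions, and as a bonus the estimate is independent of the H\"older norms of the $A_i$. If you want to salvage your approach, you would have to supply an independent proof of the global classification in 2D, and that proof would in effect reproduce the paper's topological argument.
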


The estimate in Theorem \ref{TLip} does not depend on the H\"older norm of the $A_i$ and in fact our proof carries through even when the $A_i$ are merely measurable (see Remark \ref{measurable}).

In some special cases, Theorem \ref{TLip} is well known in any dimension. In the case when $L_1=L_2=\Delta$ and for a rather general class of $G$'s,  Lipschitz continuity of a solution follows from the celebrated monotonicity formula of Alt-Caffarelli-Friedman \cite{ACF} (see also \cite{CJK, MP}). In a recent paper \cite{DK}, the authors prove Lipschitz continuity of the solution of a two-phase free boundary problem governed by the $p$-Laplacian for a special class of isotropic $G$'s. The case of two different operators is however more delicate and no other results on the Lipschitz regularity of solutions are available in the literature. Some partial regularity results of the free boundary are proved in \cite{AM, F}.

We remark that Theorem \ref{TLip} cannot hold in this generality in dimension $n \ge 3$. Indeed, say for $n=3$, it is not difficult to construct two homogeneous functions of degree less than one, that solve two different uniformly elliptic equations in complementary domains in $\R^3$ and satisfy the free boundary condition for a specific $G$.

After obtaining Theorem \ref{TLip}, we can also characterize the blow-up limits at free boundary points. Assume that $0 \in F(u)$ and let $u^*$ be a blow-up limit along a subsequence $r_k \to 0$, i.e.
$$u^*(x)=\lim_{k \to \infty} \frac{u(r_kx)}{r_k}.$$
Due to Theorem \ref{TLip} such blow-up limit functions exist and they are Lipschitz. Our second main result characterizes such blow-up limits.

\begin{thm}\label {T2}
Assume $u^*$ is as above.

Then either $u^*$ is a two plane-solution 
\begin{equation}\label{2p}
u^*=a (x\cdot \nu)^+ - b (x \cdot \nu)^- \quad \quad \mbox{with} \quad a, b>0, 
\end{equation}
or 
\begin{equation}\label{1p}
(u^*)^- \equiv 0,
\end{equation} 
which means that $u^*$ solves the one-phase problem for $L_1$.
\end{thm}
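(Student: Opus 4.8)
The plan is to pass to the limit in the free boundary problem and see that $u^*$ is a global Lipschitz solution of a two-phase problem with \emph{constant-coefficient} operators, and then to classify such global solutions, using crucially that we are in the plane.

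First I would set up the limiting problem. The rescalings $u_k(x):=u(r_kx)/r_k$ solve \eqref{TP} (in $B_{1/r_k}$) with operators $L_i^kw:=tr(A_i(r_kx)D^2w)$ and the same $G$. Theorem~\ref{TLip}, applied at every small scale, yields a uniform interior gradient bound for $u$ near $0$, so the $u_k$ are uniformly Lipschitz on compact sets; hence along a subsequence $u_k\to u^*$ locally uniformly, with $u^*$ Lipschitz and $u^*(0)=0$, and since the $A_i$ are continuous, $A_i(r_kx)\to\bar A_i:=A_i(0)$ uniformly on compact sets. By stability of viscosity solutions, $\bar L_1u^*=0$ in $\{u^*>0\}$ and $\bar L_2u^*=0$ in $\{u^*\le0\}^\circ$, with $\bar L_iw:=tr(\bar A_iD^2w)$, and the free boundary condition passes to the limit by the usual comparison argument — sliding a one-sided test function until it touches, using the continuity and strict monotonicity of $G$. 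So that the free boundary survives, I would also record a non-degeneracy estimate $\sup_{B_r(0)}u^+\ge cr$ at $0\in F(u)$ (a consequence of the free boundary relation together with a barrier/Harnack estimate), which passes to $u^*$ and gives $(u^*)^+\not\equiv0$ and $0\in F(u^*)$. Thus $u^*$ is a global Lipschitz solution of the constant-coefficient two-phase problem.

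If $(u^*)^-\equiv0$ then $u^*$ solves the one-phase problem for $\bar L_1$ and \eqref{1p} holds, so assume $(u^*)^-\not\equiv0$; then $D^+:=\{u^*>0\}$ and $D^-:=\{u^*<0\}$ are both non-empty open sets, and, $u^*$ being Lipschitz with $u^*(0)=0$, each phase has linear growth, $0\le (u^*)^\pm(x)\le C|x|$. Two further observations: every connected component of $D^+$ and of $D^-$ is unbounded, since on a bounded component the corresponding phase is $\bar L_i$-harmonic with zero boundary values and hence vanishes; and, after the linear change of variables that turns $\bar L_i$ into the Laplacian, the corresponding phase becomes, in those coordinates, a positive harmonic function on an unbounded planar domain vanishing on its boundary. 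Since a positive harmonic function vanishing on the two sides of a planar cone of opening $\alpha$ grows like $r^{\pi/\alpha}$, the linear growth of the phase forces the asymptotic opening of $D^+$, and likewise of $D^-$, to be at least $\pi$ — a property invariant under invertible linear maps, hence meaningful back in the original coordinates. As $D^+$ and $D^-$ are disjoint their asymptotic openings sum to at most $2\pi$, so each is exactly $\pi$: the two phases are complementary half-planes at infinity.

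The remaining, and main, difficulty is to upgrade this asymptotic flatness to an exact statement. A blow-down argument together with the rigidity of the corresponding (flat) cone configurations should force $F(u^*)$ to be a line $\{x\cdot\nu=0\}$ with $u^*$ $\bar L_i$-harmonic — i.e. linear — on each side; the free boundary condition then links the two slopes, giving $u^*=a(x\cdot\nu)^+-b(x\cdot\nu)^-$ with $a=G(b,\nu)$ and, since $(u^*)^-\not\equiv0$, $a,b>0$, which is \eqref{2p}. The obstacle is that, with no Alt-Caffarelli-Friedman monotonicity formula available for two different operators, $u^*$ need not be homogeneous, so one cannot simply invoke the classification of homogeneous solutions; instead one must rule out by hand unbounded but ``thin'' tentacles of either phase, keeping the free boundary condition under control along them via \eqref{omega} and Theorem~\ref{TLip}, and show directly that the free boundary is a single line. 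This is where the argument is genuinely two-dimensional, consistent with the failure of the statement for $n\ge3$ recalled above.
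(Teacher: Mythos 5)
Your setup of the limiting problem is fine, but the classification itself --- which is the entire content of the theorem --- is not carried out. The argument stops at an asymptotic statement (``the two phases are complementary half-planes at infinity'') and then explicitly defers the main step: you write that a blow-down plus rigidity ``should force'' $F(u^*)$ to be a line, and you correctly diagnose why this is hard (no Alt--Caffarelli--Friedman monotonicity for two different operators, no homogeneity of blow-downs, the need to exclude thin unbounded tentacles). That deferred step \emph{is} the theorem; identifying the obstruction is not the same as overcoming it. Moreover the half-plane heuristic itself is fragile: the two phases are reduced to harmonic functions by \emph{different} linear changes of variables, the phase domains are not cones so the growth-versus-opening dichotomy requires a genuine Phragm\'en--Lindel\"of argument on rough unbounded planar domains, and ``asymptotic opening'' is neither precisely defined nor obviously additive across the two coordinate systems. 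Even granted in full, it yields only flatness at infinity, with no mechanism to conclude that $u^*$ is exactly a two-plane solution.

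The paper's proof proceeds along a completely different route that never examines the phases at infinity. It studies the \emph{image of the gradient map}, $D^{\pm}=\nabla u^*(\{\pm u^*>0\})$, fixes a direction $e_2$ meeting $D^-$, and defines the extremal slope $b=\max\{t:\ te_2\in\overline{D^-}\ \text{or}\ G(t,e_2)e_2\in\overline{D^+}\}$, $a=G(b,e_2)$. The crux is Lemma \ref{l2.4}: the extremal gradient is actually \emph{attained}, $e_2\in D^+\cup D^-$, not merely approximated. This is established by a two-dimensional topological argument: Lemma \ref{l3} shows that $\{u^*<p_0\}$ has a single connected component near the origin (via sliding linear barriers $\ell_s$ against the extremality of $b$, and a polygonal separation argument); one then produces points $y_m$ with $D^2u_m(y_m)\neq 0$ and tangent two-plane solutions $p_m\to p_0$, applies the two-components Lemma \ref{l2}, and uses the gluing construction of Remark \ref{r1} to force a component of $\{u_m<p_m\}$ to collapse onto $\{u^*=p_0\}$, whence $u^*\equiv p_0$ there --- a contradiction with extremality. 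Once $e_2$ is attained at some $x_0$, the open mapping theorem for gradients of solutions of planar elliptic equations, combined with the extremality of $b$, forces $\nabla u^*\equiv e_2$ near $x_0$, and the unique continuation Lemma \ref{uc} propagates this to $u^*\equiv p_0$ globally. To repair your proposal you would need to supply an argument of this kind (or an alternative of comparable strength) in place of the asymptotic heuristic.
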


As remarked above, Theorem \ref{T2} cannot hold in this generality in dimension $n \ge 3$.

If a blow-up limit $u^*$ of $u$ is a two-plane solution then we prove smoothness of the 
free boundary of $u$ in a neighborhood of $0$, provided that $G$ is more regular. This can be achieved 
in any dimension by perturbation techniques by first analyzing the regularity of a transmission-type 
problem across $\{x_n=0\}$ for two different linear operators (see for example \cite{DFS1, DFS2, AM}).

Assume that $G$ is smooth and homogenous of degree one in $b$. Then the alternative \eqref{1p} above gives $u^* \equiv 0$. By compactness we obtain the following result.

\begin{thm}\label{T3}
Assume that $G(b,\nu)$ is homogenous of degree one in $b$, and $G(1,\nu)$ is $C^{1}$ in $\nu$. Assume that $$0 \in F(u), \quad \quad |u| \le 1 \ \text{in}  \ B_1.$$
Then there exist $a$, $b \ge 0$ and a direction $\nu$ such that 
$$\left|u(x)-\left[a(x \cdot \nu)^+-b(x \cdot \nu)^- \right] \right| \le C_0 |x|^{1+\alpha}, \quad \quad \quad \mbox{with $a=G(b,\nu)$}.$$
The constants $\alpha$, $C_0$ are universal, i.e. they depend only on $\lambda$, $\Lambda,$ the H\"older norm of the $A_i$ and $G$.

If $a \ne 0$ then $F(u)$ is $C^{1,\alpha}$ in a neighborhood of $0$ and if $a=0$ then $u$ is pointwise $C^{1,\alpha}$ at the origin. 
\end{thm}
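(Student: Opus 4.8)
The plan is to combine the structural results of Theorems \ref{TLip} and \ref{T2} with a compactness/contradiction argument, followed by an improvement-of-flatness iteration. First I would invoke Theorem \ref{TLip} to get that $u$ is Lipschitz near $0$ with a universal bound, so that all blow-up limits $u^*$ at $0$ exist and are Lipschitz. By Theorem \ref{T2}, each such $u^*$ is either a two-plane solution $a(x\cdot\nu)^+ - b(x\cdot\nu)^-$ or has $(u^*)^-\equiv 0$. Now I use the hypothesis that $G$ is homogeneous of degree one: in the one-phase alternative, the free boundary condition forces $u_\nu^+ = G(0,\nu) = 0$ at $0$ in the viscosity sense, and since $u^*$ is a global nonnegative $L_1$-harmonic function vanishing on its positive set's boundary with zero gradient there, a Liouville-type argument (or Hopf lemma at the free boundary) gives $u^*\equiv 0$. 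Thus in every case the blow-up is of the form $a(x\cdot\nu)^+ - b(x\cdot\nu)^-$ with $a = G(b,\nu)$, $a,b\ge 0$ (allowing $a=b=0$).

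Next I would promote this ``every blow-up is a two-plane solution'' to a quantitative decay statement. The standard device is a compactness argument: suppose the conclusion $|u(x) - [a(x\cdot\nu)^+ - b(x\cdot\nu)^-]| \le C_0|x|^{1+\alpha}$ fails; then there is a sequence of solutions $u_k$, scales $r_k\to 0$, and the rescalings $u_k(r_k x)/r_k$ stay at a fixed distance from the set of two-plane solutions while still being uniformly Lipschitz. Passing to a limit and using Theorem \ref{T2} again on the limit yields a contradiction, provided one has continuity/closedness of the class of solutions under this rescaled limit — which follows from the viscosity formulation of \eqref{TP} and the continuity of $G$. This gives, for a universal modulus, that $u$ is within $o(r)$ of \emph{some} two-plane solution at every scale $r$; the homogeneity-one and $C^1$ regularity of $G(\cdot,\nu)$ in $\nu$ is what lets the approximating two-plane solutions at consecutive dyadic scales be compared, so that the directions $\nu_k$ and slopes converge, producing a single limiting triple $(a,b,\nu)$ with $a=G(b,\nu)$.

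To upgrade $o(r)$ to the Hölder rate $|x|^{1+\alpha}$ I would run an improvement-of-flatness iteration. When $b\ne 0$ (so $a\ne 0$ as well), this is the classical perturbation argument for two-phase transmission problems across a hyperplane for two different linear operators, as carried out in \cite{DFS1,DFS2,AM}: if $u$ is $\varepsilon$-flat between two nearby two-plane solutions in $B_r$, then in $B_{r/2}$ it is $(\varepsilon/2)$-flat after a rotation of the normal by $O(\varepsilon)$, using the regularity of $G(1,\cdot)$ to linearize the free boundary condition; iterating gives the $C^{1,\alpha}$ expansion and, simultaneously, $C^{1,\alpha}$ regularity of $F(u)$ near $0$. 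When $a=b=0$, the blow-up is identically zero, and the same compactness scheme instead shows $u$ decays faster than linearly; a De Giorgi-type iteration on $\sup_{B_r}|u|/r$ then yields pointwise $C^{1,\alpha}$ at the origin with all first derivatives vanishing. The remaining case $b=0$, $a\ne 0$ cannot occur since $a=G(0,\nu)=0$.

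The main obstacle I anticipate is the improvement-of-flatness step in the genuinely two-phase regime $a,b>0$ with \emph{two different} operators $L_1,L_2$: one must control the solution of the linearized transmission problem across the flat interface, where the flux-matching condition $\partial_\nu u^+ = G'(b,\nu)\,\partial_\nu u^- + (\text{tangential terms from }\nabla_\nu G)$ couples the two harmonic-type functions, and establish the Hölder regularity of its normal derivative from both sides — the $C^1$ dependence of $G$ on $\nu$ is exactly what is needed to close this, but handling the variable-coefficient, Hölder-continuous $A_i$ (rather than constant-coefficient) while keeping the constants universal requires care. The one-phase degenerate case $a=0$ is comparatively soft, reducing to known one-phase Bernoulli-type regularity.
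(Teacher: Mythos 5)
Your proposal follows essentially the same route as the paper: Lipschitz bounds and the blow-up classification of Theorem \ref{T2}, degree-one homogeneity of $G$ to force $u^*\equiv 0$ in the one-phase alternative and to make the class invariant under the rescaling $u\mapsto r^{-(1+\alpha)}u(rx)$, a compactness dichotomy at each scale (either $\eps_0$-flatness with respect to a nondegenerate two-plane solution or geometric decay of $\sup|u|/r$), and the $\mathrm{DFS}$-type flatness theorem for the linearized transmission problem (the paper's Proposition \ref{p1}) to conclude. The only cosmetic difference is that the paper stops the iteration the first time nondegenerate flatness is reached and invokes Proposition \ref{p1} once, rather than running improvement of flatness at every scale.
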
 

The strategy to prove the main theorems, Theorem \ref{TLip} and Theorem \ref{T2}, relies on two-dimensional topological arguments which involve intersecting the graph of the solution with a family of planes. These ideas go back to the work of Bernestein \cite{B} and Hopf \cite{H} and have been used more recently in \cite{DS, S}.

The paper is organized as follows. In Section 2 we prove Theorem \ref{TLip} and Theorem \ref{T2}. We describe our compactness method and prove Theorem \ref{T3} in Section 3. Section 4 is devoted to Bellman's equation and the proof of Theorem \ref{T0}. 
\section{Two-phase free boundary problems}

In this Section we provide the proofs of our main results, Theorem \ref{TLip} and Theorem \ref{T2}. First we introduce some standard definitions and prove some preliminary lemmas.

As in the Introduction (see \eqref{TP}), let $u: B_1 \subset \R^2 \to \R$ satisfy
\begin{equation}\label{FBC}
\left \{
\begin{array}{lr}
 L_1 u=0 \quad \quad \mbox{in} \quad B_1^+(u), \\
 L_2 u=0 \quad \quad \mbox{in} \quad B_1^-(u), \\
 u_\nu^+=G(u_\nu^-,\nu) \quad \quad \mbox{on} \quad F(u) \quad \mbox{when $u_\nu^->0$.}
\end{array}
\right.
 \end{equation}
We point out that we require the free boundary condition to hold only when $u_\nu^- \ne 0$. This is a weaker definition than it usually appears in the literature and it is understood in the following viscosity sense.

\begin{defn} We say that  $u$ satisfies the free boundary condition 
$$ u_\nu^+=G(u_\nu^-,\nu)$$
at a point $y_0 \in F(u)$ if for any unit vector $\nu$, there exists no function $\psi \in C^2 $ defined in a neighborhood of $y_0$ with $\psi(y_0)=0$, $\nabla \psi(y_0)=\nu$ such that either of the following holds:

1) $a \psi^+ - b \psi^- \le u$ with  $a>0$, $b > 0$  and $a> G(b, \nu)$ (i.e. $u$ is a supersolution);

2) $a \psi^+ - b \psi^- \ge u$ with $a>0$, $b > 0$ and $a < G(b, \nu)$ (i.e. $u$ is a subsolution).
\end{defn}

We only use comparison functions which cross the $0$ level set transversally and therefore have a nontrivial negative part. For this reason the free boundary condition is preserved when taking uniform limits. It is straightforward to check that a uniform limit of solutions of \eqref{FBC} satisfies \eqref{FBC} as well.

\begin{defn}A {\it two-plane solution} $p$ to \eqref{FBC} is given by
$$p(x)=p_{x_0,\nu,a,b}(x):=a((x-x_0)\cdot \nu)^+- b((x-x_0)\cdot \nu)^-,$$
for some $x_0 \in \R^2$, $\nu \in S^1$ and with $$a=G(b, \nu), \quad \quad \mbox{and} \quad a>0, b>0.$$\end{defn}

Given a function $u$ that satisfies \eqref{FBC} and a point $y$ away from the free boundary of $u$, we often consider the two-plane solution $p$ which is tangent to the graph of $u$ at $(y,u(y))$, i.e. such that $u(y)=p(y)$ and $\nabla u(y)=\nabla p(y)$. Notice that this two-plane solution might not always exist. It is well defined unless either $\nabla u(y)=0$ or if $u(y)>0$ and 
$$|\nabla u(y)| \le G (0,\nu)  \quad \quad \mbox{with} \quad 
\nu=\frac{\nabla u(y)}{|\nabla u(y)|}.$$

From the viscosity definition above and the Hopf lemma we see that a two-plane solution $p$ cannot touch $u$ by above (or below) on the free boundary $\{p=0\}$ unless $u$ and $p$ coincide. As a consequence we obtain the following maximum principle.
  
\begin{lem}[Maximum principle] \label{l1}
Let $\Omega$ be a bounded domain and $p$ a two-plane solution. If $u\le p$ (or $u \ge p$) on $\p \Omega$ then $u\le p$ (respectively $u \ge p$) in $\Omega$. 
\end{lem}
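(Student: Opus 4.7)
The plan is to prove the direction $u \le p$ on $\partial\Omega \Rightarrow u \le p$ in $\Omega$ by a sliding/moving plane argument; the reverse inequality is symmetric. Write $p = p_{x_0,\nu,a,b}$ and introduce the family of translated two-plane solutions
$$p^s(x) := p(x + s\nu), \qquad s \ge 0.$$
Each $p^s$ is itself a two-plane solution with the same $a,b,\nu$ (only the base point is shifted), and a direct computation shows $p^s$ is strictly monotone increasing in $s$ pointwise; in particular $p^s > p$ in $\bar\Omega$ for every $s > 0$, and $p^s > u$ everywhere in $\bar\Omega$ once $s$ is large enough.

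Suppose for contradiction that $u(y) > p(y)$ at some $y \in \Omega$, and set
$$s^* := \inf\bigl\{\, s \ge 0 \, : \, p^s \ge u \ \text{in}\ \bar\Omega \,\bigr\} \in (0,\infty).$$
By continuity, $p^{s^*} \ge u$ in $\bar\Omega$ with equality at some $z_0 \in \bar\Omega$. If $z_0 \in \partial\Omega$ then $u(z_0) \le p(z_0) < p^{s^*}(z_0) = u(z_0)$, which is impossible, so $z_0 \in \Omega$. I would then introduce the coincidence set $E := \{u = p^{s^*}\} \cap \Omega$, which is nonempty and closed in $\Omega$; it cannot equal $\Omega$ (otherwise by continuity $u = p^{s^*} > p$ on $\partial\Omega$, contradiction). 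Picking $w_0 \in \Omega \cap \partial E$, one has $u(w_0) = p^{s^*}(w_0)$ while $u \not\equiv p^{s^*}$ in any neighborhood of $w_0$.

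It remains to rule out this touching at $w_0$, splitting by the sign of $p^{s^*}(w_0)$. If $p^{s^*}(w_0) > 0$ then $u > 0$ in a neighborhood of $w_0$, both $u$ and $p^{s^*}$ satisfy $L_1 = 0$ there, and the strong maximum principle applied to $p^{s^*} - u \ge 0$ forces $u \equiv p^{s^*}$ near $w_0$, contradicting $w_0 \in \partial E$. The case $p^{s^*}(w_0) < 0$ is identical with $L_2$ replacing $L_1$.

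The remaining case $p^{s^*}(w_0) = 0$ is a touching at the free boundary, and this is the main obstacle, since the strong maximum principle is not directly available across $F(p^{s^*})$. Here I appeal directly to the observation recorded in the paragraph just before the statement: the Hopf lemma applied on each side of $F(p^{s^*})$, combined with the viscosity free boundary condition and the strict monotonicity of $G(\cdot,\nu)$, prohibits a two-plane solution from touching $u$ from above at a free boundary point unless $u \equiv p^{s^*}$. This final contradiction closes the argument.
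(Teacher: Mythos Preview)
Your proof is correct and follows essentially the same approach as the paper: both use the sliding family $p^s(x)=p(x+s\nu)$, which is strictly increasing in $s$ and goes to $+\infty$, to find a first touching point and derive a contradiction. The paper's proof is a two-sentence sketch of this sliding argument; you have simply written out the details, using the coincidence set $E$ to locate a point $w_0$ where either the strong maximum principle for $L_1$ or $L_2$ (in the cases $p^{s^*}(w_0)\ne 0$) or the Hopf/viscosity observation recorded just before the lemma (in the case $p^{s^*}(w_0)=0$) produces the contradiction.
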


\begin{proof}

We compare $u$ with the continuous family of two-plane solutions $t \mapsto p(x+t\nu)$ which is strictly increasing in $t$. These solutions converge to $\pm \infty$ as $t \to \pm \infty$, and in view of the discussion above the maximum principle applies.
\end{proof}

We assume for simplicity that $A_i(x)$ are H\"older continuous and therefore $u$ is $C ^2$ in $\{u>0\} \cup \{ u<0\}$. If $u$ is not linear in a neighborhood of a point $z$, then $z$ belongs to the closure of the set $\{D^2u \ne 0  \}$. In this nondegeneracy set the following topological lemma holds in 2D.

\begin{lem}[Connected components]\label{l2}
Assume $u(x_0) \ne 0$ and $D^2u(x_0) \ne 0$. Let $p$ be a two-plane solution such that at $x_0$ we have $u=p$ and $\nabla u= \nabla p$. Then $\{u>p\}$ (resp. $\{u<p\}$) has two distinct connected components starting at $x_0$ which exit $B_1$. 
\end{lem}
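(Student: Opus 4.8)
The plan is to work locally near $x_0$, where by assumption $u$ is $C^2$ and agrees with the two-plane solution $p$ to second order fails: we have $u(x_0)=p(x_0)$, $\nabla u(x_0)=\nabla p(x_0)$, but $D^2u(x_0)\ne 0$ while $p$ is affine on the half-space containing $x_0$ (say $x_0\in\{u>0\}$, so $p(x)=a(x\cdot\nu - c)$ near $x_0$ with $a=|\nabla u(x_0)|$). Thus $w:=u-p$ vanishes together with its gradient at $x_0$ but has $D^2w(x_0)=D^2u(x_0)\ne 0$. Since $w$ solves a uniformly elliptic equation $L_1 w = 0$ near $x_0$ (because $L_1 p$ is a constant that must vanish, as $p$ is affine on that side and $L_1 u = 0$), $w$ is, up to higher-order terms, a nonzero harmonic-type quadratic polynomial vanishing to first order at $x_0$. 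In 2D such a function — more precisely, a nonconstant solution of a uniformly elliptic equation in non-divergence form with a critical point — behaves near that point like the real part of $z^m$ for some integer $m\ge 2$ (this is the classical local structure of solutions of 2D elliptic equations at a critical point, going back to Bers--Nirenberg / Hartman--Wintner; the index at the critical point is $\ge 2$). Consequently the nodal set $\{w=0\}$ near $x_0$ consists of $2m\ge 4$ arcs emanating from $x_0$, dividing a small punctured disk into $2m$ sectors on which $w$ alternates sign. In particular $\{w>0\}$ has at least two connected components with $x_0$ on their boundary, and likewise $\{w<0\}$.

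The next step is to upgrade these "local components near $x_0$" to genuine connected components of $\{u>p\}$ (resp. $\{u<p\}$) in $B_1$ that reach $\partial B_1$. Here I would invoke Lemma \ref{l1}: the two-plane solution $p$ cannot be touched from above or below by $u$ except trivially, and the family $t\mapsto p(\cdot+t\nu)$ foliates by ordered two-plane solutions. A component $U$ of $\{u>p\}$ is a bounded set on whose boundary $u\le p$; if $U$ were compactly contained in $B_1$, then on $\partial U$ we have $u\le p$ but we would need strict inequality somewhere interior, contradicting the maximum principle applied on $U$ — more precisely, $u-p$ would be a nonnegative subsolution (or one handles the sign via $L_1$ vs.\ $L_2$ on the two phases) vanishing on $\partial U$ yet positive inside, so it must be identically zero on $U$, a contradiction. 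Actually the cleanest route: on a bounded component $U\subset\subset B_1$ of $\{u>p\}$, apply Lemma \ref{l1} with $\Omega=U$ to conclude $u\le p$ in $U$, contradicting $u>p$ there. Hence every such component must exit $B_1$. Since the $\ge 4$ local sectors at $x_0$ belong to $\{u>p\}\cup\{u<p\}$ (the set $\{u=p\}$ near $x_0$ is the union of the $2m$ nodal arcs, a closed set of empty interior), at least two of the $\{u>p\}$-sectors lie in distinct components — or, if two adjacent sectors happened to reconnect away from $x_0$, one still gets that $\{u>p\}$ has components exiting $B_1$; I will argue that at least two distinct such components touch $x_0$ by a planar separation argument using that the $2m$ arcs issue from $x_0$ in cyclic order and the two phases alternate.

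The main obstacle I anticipate is the second step's topological bookkeeping: ensuring that two of the local sectors at $x_0$ really sit in *distinct* connected components of $\{u>p\}$ rather than all merging into one component that wraps around. This is exactly where two-dimensionality is essential and where a Jordan-curve / separation argument enters — one uses that $\{u<p\}$ also has a sector between two consecutive $\{u>p\}$-sectors, and a continuum in $B_1$ joining two $\{u>p\}$-sectors around $x_0$ would have to cross a $\{u<p\}$-sector, which is impossible if that intervening component also exits $B_1$ on the "other side." Making this precise likely requires choosing the arc/curve carefully and invoking the maximum-principle exit statement for *both* signs simultaneously; the analytic input (local $z^m$ structure, $2m\ge 4$) is standard, but threading the planar topology cleanly is the delicate part. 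A secondary technical point is justifying the $z^m$ local normal form when $A_1$ is only Hölder (not smooth), for which I would cite the Hartman--Wintner-type results on the structure of solutions of 2D elliptic equations near critical points, which hold under exactly this regularity.
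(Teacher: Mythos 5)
Your proposal is correct and follows essentially the same route as the paper: the local sign structure of $w=u-p$ at $x_0$, the maximum principle (Lemma \ref{l1}) applied to a compactly contained component to force every component to exit $B_1$, and a planar Jordan-curve separation argument showing that if the two sectors of one sign reconnected they would trap a component of the opposite sign, contradicting the exit property. The only real difference is that your appeal to the Hartman--Wintner/Bers--Nirenberg $z^m$ normal form is unnecessary: since $L_1(u-p)=0$ near $x_0$ and $D^2(u-p)(x_0)=D^2u(x_0)\ne 0$ has vanishing trace with respect to the positive definite matrix $A_1(x_0)$, its eigenvalues have opposite signs, so $u-p$ has an exact nondegenerate saddle ($m=2$) at $x_0$, which is all the paper uses.
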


\begin{proof}

The existence of the two components locally near $x_0$ is clear, since $u$ is a solution to an elliptic equation and the eigenvalues have opposite sign. These components cannot be compactly included in $B_1$ by the maximum principle.

Moreover, if the two components reconnect  further out inside $B_1$ then, since we are in $\R^2$, they enclose a domain compactly included in $B_1$. This domain contains a component of $\{u<p\}$ and we reach a contradiction again. 
\end{proof}

\begin{rem}\label{measurable}
We remark that in the case when the $A_i$ are merely measurable, Lemma \ref{l2} still holds in a dense subset of a neighborhood  where the function is not linear (see Lemma 1 in \cite{S}.)
\end{rem}
We are now ready to prove Theorem \ref{TLip}.

\begin{proof}[\it Proof of Theorem $\ref{TLip}$]

Assume that 
$$ \mbox{$|u| \le 1$ in $B_1$ and $|\nabla u(x_0)| \gg 1$,}$$ 
for some $x_0 \notin \{u=0\}$ near $0$. We will reach a contradiction.

If $u$ is linear in a neighborhood around $x_0$ then, by unique continuation, $u$ coincides with this linear function $\ell$ in either the set $\{\ell <0\}$ or $\{\ell >0\}$, and since $|\nabla \ell| \gg 1$ we contradict that $|u| \le 1$ in $B_1$. Thus $u$ is not linear near $x_0$, and without loss of generality we may assume that $D^2u(x_0) \ne 0$.
 
 Let $p$ be the two plane solution such that at $x_0$ we have $u=p$, $\nabla u=\nabla p$, and say for simplicity that
$$p=a x_2^+ -b x_2^-, \quad a=G(b,e_2),$$ 
with $a,b \gg 1,$ which follows from our contradiction assumption and the properties of $G$. Then, in view of Lemma \ref{l2}, $\{ u< p\}$ has two distinct components starting at $x_0$. 

Since $p$ grows fast in the $x_2$ direction and $|u| \le 1$ it follows that one component $\mathcal U$ of $\{ u<p \}$ that starts at $x_0$ is included in the thin strip
$$\mathcal U \subset \left \{-\frac 1b < x_2 < \frac 1a \right \} .$$

Let $P$ be a non self-intersecting polygonal line included in $\mathcal U$ that starts near $x_0$ and exits $B_1$ say on the right side of the strip above. Let $R$ be the rectangle
$$R:=(\frac 14,\frac 34) \times (-\frac 14, \frac 14) ,$$
and $\bar P \subset P \cap \bar R$ a part of the polygonal line $P$ which connects the two lateral sides of $R$. Then $\bar P$ splits $R$ into two components and denote by $\mathcal V$ the component on the top (see Figure \ref{fig1}).

\begin{figure}[h]
\includegraphics[width=0.8 \textwidth]{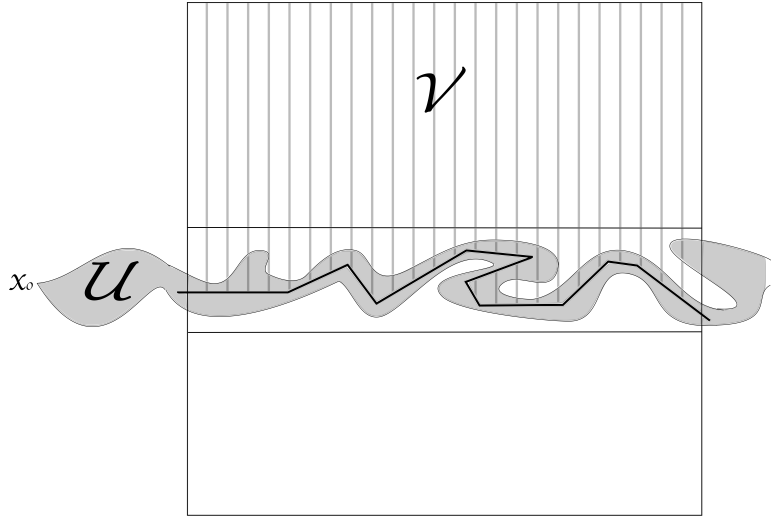}
\caption{Region $\mathcal V$}
    \label{fig1}
\end{figure}

Define $w$ is $R$ as
\be\label{w}  w:=\left\{ \begin{array}{l}
u \quad \mbox{in} \quad R \setminus \mathcal V, \\
u \quad \mbox{in} \quad \mathcal V \cap \mathcal U,\\
p \quad \mbox{in} \quad \mathcal V \setminus U.
\end{array}
\right.
\ee
We claim that $w$ is a supersolution. Indeed, notice that $w$ is  a continuous function and it is a solution away from the set $\p \mathcal U \cap \mathcal V$. On the other hand in this set $w=p$ and $w \le p$ in a neighborhood of it, and the claim is proved.

However, $w$ cannot be a supersolution since it stays a bounded distance away from the function $a x_2^+$ with $a \gg 1$, which is a strict subsolution. 

Precisely we have
\be\label{wa}
|w-a x_2^+| \le 2.
\ee
Let 

\be\label{cs}
\varphi=x_2-x_1^2+ C(\lambda,\Lambda) (x_2-x_1^2)^2,
\ee
with $C$ large such that $\varphi$ is a subsolution for both $L_1$, $L_2$. Then
$$\Psi:=\omega(\gamma) \varphi^+ - \gamma \varphi^- $$ is a (classical) subsolution for the two-phase 
problem in a fixed neighborhood of $0$. Here $\omega$ is defined in \eqref{omega}. From \eqref{wa} we may choose $\gamma$ large, universal such that 
a translation of the graph of $\Psi$ is tangent by below at an interior point to the graph of $w$, provided that $a$ is sufficiently large, and we reach a contradiction. 
\end{proof}

\begin{rem}\label{r1}
In the proof above we used the thin component $\mathcal U$ which concentrates near the line $\{x_2=0\}$ in order to {\it glue} the solution $u$ on one side of $\mathcal U$ with the two-plane solution $p$ on the other side of $\mathcal U$ and obtain a supersolution. One can also obtain a subsolution if in the glueing region $\mathcal U$ we replace $u$ by $p$ i.e.
\be\label{v}v:=\left\{ \begin{array}{l}
u \quad \mbox{in} \quad R \setminus (\mathcal V \cup \mathcal U), \\
p \quad \mbox{in} \quad \mathcal V \cup \mathcal U
\end{array}
\right.
\quad \quad \quad \mbox{is a subsolution.}
\ee

Assume now that a sequence $v_m$ of solutions converges uniformly in ball $B_r(z)$ to a function $v^*$ and $p_m$ a sequence of two-plane solutions converges to $p_0$. If there are connected components of $\{v_m<p_m\}$ which converge in the Hausdorff distance to a $C^1$ curve $$\Sigma \subset \{v^*=p_0\} \cap B_r(z),$$ then using supersolutions and subsolutions as in \eqref{w}-\eqref{v}, we see that the function which is $v^*$ on one side of $\Sigma$ and $p_0$ on the other side of $\Sigma$ is also a solution. By the unique continuation result below we deduce that $v^*$ coincides with $p_0$. 
\end{rem}

\begin{lem}[Unique continuation] \label{uc}
Assume that $u$ satisfies \eqref{FBC} and $u=p$ in an open set, where $p$ is a two-plane solution. Then $u \equiv p$. 
\end{lem}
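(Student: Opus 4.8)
The plan is to argue by contradiction: suppose $u = p$ on an open set $W$ but $u \not\equiv p$. Let $\Omega_0$ be the interior of the set $\{u = p\}$; then $\Omega_0$ is nonempty (it contains $W$) and, by assumption, $\Omega_0 \ne B_1$, so it has a boundary point $z_0 \in B_1 \cap \partial \Omega_0$. The goal is to show that $u = p$ in a full neighborhood of $z_0$, contradicting $z_0 \in \partial \Omega_0$. The analysis naturally splits according to whether $z_0$ lies on the free boundary of $p$, i.e. whether $p(z_0) = 0$, and according to whether $u$ is linear near $z_0$.

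First I would dispose of the interior case, $p(z_0) \ne 0$. Say $p(z_0) > 0$, so near $z_0$ we have $p = a\,(x\cdot\nu)$ a pure linear function and $u$ solves $L_1 u = 0$ in a neighborhood of $z_0$ (shrinking so that $u > 0$ there, using continuity and $u = p > 0$ on the nearby piece of $\Omega_0$). Then $u - p$ solves the same linear uniformly elliptic equation $L_1(u-p)=0$ near $z_0$ and vanishes on the open set $\Omega_0$ near $z_0$; by the strong unique continuation property for second-order uniformly elliptic equations (or simply because $u-p$ is a solution vanishing on an open set, hence $\equiv 0$ on the connected component by the classical unique continuation theorem for such equations), $u \equiv p$ near $z_0$, a contradiction. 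The case $p(z_0) < 0$ is identical with $L_2$.

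The remaining, and main, case is $p(z_0) = 0$, so $z_0 \in F(u)$. Here I would use the two-dimensional topological machinery developed earlier. If $u$ were linear near $z_0$, unique continuation in each phase forces $u$ to coincide with that linear function, which then must match $p$ on the open set $\Omega_0$, forcing $u = p$ nearby; so assume $u$ is not linear near $z_0$ and, WLOG, $D^2 u \ne 0$ at nearby points. Now $p$ is exactly the two-plane solution tangent to $u$ at points of $\Omega_0$ arbitrarily close to $z_0$; pick such a tangency point $x_0 \in \Omega_0$ with $D^2 u(x_0) \ne 0$ (if $D^2u \equiv 0$ on all of $\Omega_0$ we are in the linear case). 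By Lemma \ref{l2}, $\{u > p\}$ and $\{u < p\}$ each have two connected components emanating from $x_0$ that must exit $B_1$. But $x_0 \in \Omega_0 = \{u = p\}^\circ$, so near $x_0$ we have $u = p$ identically, contradicting the existence of points where $u \ne p$ arbitrarily close to $x_0$. Hence no such $x_0 \in \Omega_0$ with $D^2u(x_0)\ne 0$ exists, which means $D^2 u \equiv 0$ throughout $\Omega_0$ — i.e. $u$ is piecewise linear on $\Omega_0$, and since $u = p$ there, $u$ agrees with the two linear pieces of $p$. Unique continuation in each phase $\{u>0\}$, $\{u<0\}$ (each an elliptic PDE whose solution vanishes to infinite order against a linear function on an open subset) then propagates this across $z_0$, giving $u = p$ in a neighborhood of $z_0$, the desired contradiction.

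The step I expect to be the main obstacle is making the ``propagation past $z_0$'' rigorous in the free-boundary case: one must ensure that the positivity/negativity sets of $u$ near $z_0$ are controlled well enough that the linear identity $u = p$ on $\Omega_0$ genuinely forces, via unique continuation in each phase and the viscosity free boundary condition, agreement on both sides of $F(u)$ near $z_0$ — in particular ruling out that the free boundary of $u$ peels away from $\{p = 0\}$ near $z_0$. This is where one invokes the maximum principle Lemma \ref{l1} (comparing $u$ with the sliding family $p(\cdot + t\nu)$) together with the gluing construction of Remark \ref{r1}: the function equal to $u$ on one side and $p$ on the other across any candidate limiting curve in $\{u = p\}$ is again a solution, which pins the free boundary of $u$ to that of $p$ and closes the argument.
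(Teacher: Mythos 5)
The decisive step is missing. Your reduction to a boundary point $z_0$ of the interior $\Omega_0$ of $\{u=p\}$, and the interior case $p(z_0)\ne 0$, are fine. But in the main case $p(z_0)=0$ you do not actually produce an argument. The detour through Lemma \ref{l2} is vacuous: on $\Omega_0\setminus\{p=0\}$ one has $u=p$ identically, hence $D^2u\equiv 0$ there by definition, so no tangency point $x_0\in\Omega_0$ with $D^2u(x_0)\ne 0$ exists and no contradiction is obtained --- you merely restate that $u$ is linear on $\Omega_0$, which is the hypothesis. What remains, ``unique continuation in each phase propagates this across $z_0$,'' is precisely the content of the lemma at a free boundary point; you correctly flag it as the obstacle but then only gesture at Lemma \ref{l1} and Remark \ref{r1}. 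Neither closes it: Lemma \ref{l1} requires an ordering on the boundary of a domain that you have not established, and Remark \ref{r1} concerns Hausdorff limits of components for a \emph{sequence} of solutions and, moreover, its conclusion itself invokes Lemma \ref{uc}, so relying on it here would be circular.

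The paper's proof supplies exactly the missing crossing argument, and its structure is different from yours. One first assumes $u=p$ near a point of the negative phase and uses interior unique continuation to get $u=p$ on all of $\{x_2\le 0\}$; this pins the geometry, giving $\{u>0\}\subset\{x_2>0\}$ and making the whole line $\{x_2=0\}$ available. Three ingredients then cross the free boundary: (i) the Lipschitz bound yields an expansion $u^+=t\,x_2^++o(|x|)$ at the origin (Lemma 11.17 of \cite{CS}); (ii) the viscosity free boundary condition, applicable because $u_\nu^-=b>0$, forces $t=a$; (iii) a sliding family of strict comparison subsolutions $(a-\eps)\varphi^+-\beta\varphi^-$ built from \eqref{cs} shows $u>0$ in all of $B^+_{\eta/2}$, which is what rules out the free boundary of $u$ peeling away from $\{x_2=0\}$. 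Only after that does Cauchy-data unique continuation in $\{x_2>0\}$ (zero boundary value and normal derivative $a$ on a segment of $\{x_2=0\}$) give $u=ax_2$ there. Steps (i)--(iii) are absent from your proposal, so as written the proof is incomplete at its central point.
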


\begin{proof}
Assume that $u=p=ax_2^+-bx_2^-$, (with $a,b>0$, $a=G(b,e_2)$) in a neighborhood of some point in $\{x_2<0\}$. Then by unique continuation $u=p$ in $\{x_2 \le 0\}$. Since $\{u>0\} \subset \{x_2 > 0\}$ and $u$ is Lipschitz continuous we conclude that $u^+$ has an expansion at $0$ (see Lemma 11.17 \cite{CS})
$$u^+=t x_2^+ +o(|x-y|).$$
Using the free boundary condition at $0$ we find $t=a$. 

Next we claim that $\{u>0\}$ in $\{x_2>0\}$ near $0$. 
Indeed, let $\varphi$ be as in \eqref{cs} and consider the comparison function 
$$\Psi:=(a-\eps) \varphi^+-\beta \varphi^-,$$
with $\beta>0$ small such that $\Psi$ is a comparison subsolution in a small $\sigma$-neighborhood of $0$. Notice that $\Psi$ is strictly increasing in the $e_2$ direction. Using the expansion of $u^+$, we see that in $B^+_\eta$ for $\eta$ sufficiently small, we can compare $u$  with translations of the rescaled subsolutions $$ \frac{\eta}{\sigma}\Psi \left ( \frac \sigma \eta x \right ),$$ and obtain that $u>0$ in $B^+_{\eta/2}$, and the claim is proved.

The free boundary condition gives $u_{e_2}^+=a$ on $\{x_2=0\} \cap B^+_{\eta/2}$ and by unique continuation we obtain that $u$ coincides with $p$ in $B^+_{\eta/2}$ and therefore in $\{x_2>0\}$.

\end{proof}

The proof of Theorem \ref{T2} is a more refined version of the arguments used in Theorem \ref{TLip}. We present it below.

\begin{proof}[\it Proof of Theorem $\ref{T2}$]
We remark that $u^*$ is a solution to our two-phase problem with constant coefficients operators, since it is the uniform limit (on compacts) of a sequence of solutions.

Denote by 
$$D^+=\nabla u^* (\{u^*>0\}), \quad D^-=\nabla u^* (\{u^*<0\}),$$
and $D^\pm$ are bounded sets, since $u^*$ is Lipschitz. Assume that $D^- \ne \emptyset$ otherwise alternative $(b)$ holds and there is nothing to prove. We choose a direction from the origin, say $e_2$ for simplicity, which intersects $D^-$. We let
\be\label{b}b : = \max \{ t | \quad t e_2 \in \overline {D^-}  \quad \mbox{or} 
\quad G(t,e_2) e_2 \in \overline {D^+} \}, \quad \quad a:=G(b,e_2),\ee
and then $b>0$, hence $a>0$ as well. 

Without loss of generality we assume that $a=b=1$, since we can multiply $G$ by a constant so that $G(1,e_2)=1$, and let $$p_0:=x_2.$$

The definition of $a$, $b$ above says that there exists a sequence of points $x_k$ such that the 
corresponding two-plane solution for $u^*$ at $x_k$ has normal $\nu_k$, and slopes 
$a_k$, $b_k$ with $\nu_k \to e_2$, $b_k \to 1$, $a_k \to 1$. Moreover there are no points 
for which the two-plane solution has normal $e_2$ and slopes strictly bigger than $1$.

Theorem \ref{T2} will follow easily from a unique continuation argument and the open mapping theorem, once we establish the next lemma which says that the slope $e_2$ is in fact achieved at some point in $\{u^*>0\} \cup \{u^*<0\}$.

\begin{lem}\label{l2.4}
$$e_2 \in D^+ \cup D^-.$$
\end{lem}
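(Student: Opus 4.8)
\textbf{Proof plan for Lemma \ref{l2.4}.}

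The plan is to argue by contradiction, assuming that $e_2 \notin D^+ \cup D^-$ and exploiting the extremality of the pair $(a,b)=(1,1)$ together with the two-dimensional topology from Lemma \ref{l2}. Since $b$ is defined as a maximum and the closures $\overline{D^-}$, $\overline{D^+}$ are compact, the value is attained along a sequence: there are points $x_k \in \{u^* > 0\}\cup\{u^*<0\}$ whose tangent two-plane solutions $p_k$ have normals $\nu_k \to e_2$ and slopes $(a_k,b_k)\to(1,1)$. The contradiction hypothesis means $e_2$ itself is not attained, i.e. at no point does the tangent two-plane solution have exactly normal $e_2$ and slopes $(1,1)$; combined with the definition of $b$, no tangent two-plane solution has normal $e_2$ with slope $b' > 1$ either. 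I would first normalize: after a translation we may take the $x_k$ to converge (passing to a subsequence) to some point $x_\infty$, and $p_k \to p_0 = x_2$ (up to translation of $p_0$ — note $u^*(x_k)=p_k(x_k)$ so in fact the whole graph of $p_k$ passes through a controlled region and $p_0$ can be taken as $x_2$ translated to pass through $x_\infty$).

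The heart of the argument is a topological gluing exactly in the spirit of the proof of Theorem \ref{TLip} and Remark \ref{r1}. For each $k$, if $u^*$ is not linear near $x_k$ (which I would first dispose of: if $u^*$ were linear near $x_k$ with slope close to $e_2$ and big-ish slope, unique continuation would force $u^*$ to be a half-plane-type function and one checks directly, using the free boundary condition, that either $e_2 \in D^+\cup D^-$ outright or the extremality of $b$ is violated), we may assume $D^2 u^*(x_k)\neq 0$ and apply Lemma \ref{l2}: the set $\{u^* < p_k\}$ has two distinct connected components emanating from $x_k$ and exiting $B_1$. Because $p_k$ has slope close to $1$ on both sides and $u^*$ is globally Lipschitz with a fixed constant $L$, at least one such component $\mathcal U_k$ is trapped in a thin strip $\{|x_2 - (x_k)_2| < C/R\}$ when we work at scale $R$ — more precisely, rescaling so that we look at $\{u^* < p_k\}$ in a large ball $B_R$, the relevant component is confined to a strip of width $O(1)$ around the hyperplane $\{x_2 = (x_k)_2\}$ while $R \to \infty$. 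After passing to the limit, these thin components converge in Hausdorff distance to (a piece of) the line $\{x_2 = (x_\infty)_2\}$, which lies in $\{u^* = p_0\}$.

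Now I invoke Remark \ref{r1}: gluing $u^*$ on one side of this limiting curve $\Sigma$ with $p_0$ on the other side produces a solution of \eqref{FBC}, and by the unique continuation Lemma \ref{uc} this forces $u^* \equiv p_0 = x_2$ on one of the half-planes — and then, again by Lemma \ref{uc} applied across $\{x_2 = 0\}$ as in its proof, $u^* \equiv x_2$ globally. But $u^*(x) = x_2$ has $\nabla u^* = e_2$ on $\{u^* > 0\}$, so $e_2 \in D^+$, contradicting the assumption. The main obstacle I anticipate is the confinement step: showing that one of the two components from Lemma \ref{l2} genuinely stays in a thin strip (of width controlled independently of $k$, after the correct rescaling) and then that these strips produce a nondegenerate limiting curve rather than collapsing or escaping to infinity. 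This requires carefully tracking the Lipschitz bound against the growth of $p_k$ away from its zero set — essentially the same mechanism as the strip $\mathcal U \subset \{-1/b < x_2 < 1/a\}$ in the proof of Theorem \ref{TLip}, but here run at the blow-up scale where $a,b \approx 1$, so the ``thinness'' is a statement about a large ball $B_R$ with $R \to \infty$ rather than about small slopes. One must also verify that the limiting curve is $C^1$ (or at least that Remark \ref{r1}'s hypotheses are met), which should follow from interior elliptic estimates for $u^*$ away from its free boundary, since $\Sigma$ lies in the region where $u^* = p_0 \neq 0$.
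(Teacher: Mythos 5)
Your skeleton (contradiction, Lemma \ref{l2}, concentration of one component onto $\{u^*=p_0\}$, Remark \ref{r1} plus unique continuation) matches the paper's Steps 2--3, but the step you yourself flag as ``the main obstacle'' --- confining one of the two components to a thin strip --- is a genuine gap, and the mechanism you propose for it does not work. In the proof of Theorem \ref{TLip} the component $\mathcal U$ is trapped in $\{-1/b<x_2<1/a\}$ only because $a,b\gg \|u\|_{L^\infty}$: the entire half-plane $\{x_2>1/a\}$ then lies in $\{u<p\}$ and the entire half-plane $\{x_2<-1/b\}$ in $\{u>p\}$, so exactly one of the two components from Lemma \ref{l2} must stay in the strip. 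At the blow-up scale you have $a_k,b_k\approx 1$ while $u^*$ is merely Lipschitz with a constant $L$ that may be much larger than $1$; neither $\{u^*<p_k\}$ nor $\{u^*>p_k\}$ need contain a half-plane, and the components of $\{u^*<p_k\}$ can be fat. The paper replaces this mechanism by a different one, which is the real content of its Step 1 (Lemma \ref{l3}): using the extremality of $b=1$ in \eqref{b} --- no tangent two-plane solution has normal $e_2$ and slope greater than $1$ --- one slides the barriers $\ell_s=(1+s)x_2-2s$ to show that every component of $\{u^*<p_0\}$ in a thin rectangle $\mathcal R$ must reach the top side $T^+$ and every component of $\{u^*>p_0\}$ must reach the bottom side $T^-$, whence a planar separation argument shows $\{u^*<p_0\}$ has only \emph{one} component near $0$. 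It is this uniqueness, played against the \emph{two} components of $\{u_m<p_m\}$ supplied by Lemma \ref{l2}, that forces one component to collapse onto $\{u^*=p_0\}$. Nothing in your proposal plays the role of this sliding/extremality argument, so the concentration step remains unproved.

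Two secondary issues. First, the paper applies Lemma \ref{l2} not to $u^*$ at the points $x_k$ realizing the supremum in \eqref{b} (which may sit far from the origin or escape to infinity), but to the approximating rescalings $u_m$ at points $y_m=r_m x_k\to 0$; this localization is what guarantees the degenerate component lands in the small ball where Lemma \ref{l3} holds, and your version does not control where the limiting curve $\Sigma$ sits. Second, you do not treat the case in which $\{u^*=p_0\}$ contains a segment of $\{x_2=0\}$, where $u^*$ vanishes, transversality fails, and interior elliptic estimates do not directly give the regularity of $\Sigma$; the paper needs a separate branch (Case 2 of Lemma \ref{l3} and of Step 3, establishing \eqref{st2} via the barrier from Lemma \ref{uc}) to handle it.
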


\begin{proof}

Assume by contradiction that the conclusion does not hold. Then $u^*$ and $p_0$ cross transversely away from the zero level set, thus $\{ u^*= p_0\}$ consists of a union of non self-intersecting $C^2$ curves away from $\{x_2=0\}$.

Notice that any connected component 
of $\{ u^* <p_0\}$ must be unbounded.

We now divide the proof of the Lemma in three steps.

\medskip

{\bf Step 1.} We prove a statement about the connected components of $\{ u^* < p_0\}$. We show that only a finite number of the components of $\{u^*< p_0\} \cap B_1$ intersect $B_{1/2}$, and moreover only one of these components intersects a small neighborhood of $0$.

\begin{lem}\label{l3}
$\{u^*< p_0 \} \cap B_1$ has only one connected component near $0$.
\end{lem}

In other words, there is a ball $B_\delta$ such that all the points in $B_\delta \cap \{u^*<p_0\}$ can be joined by a path in $\{u^*< p_0 \} \cap B_1$.

\begin{proof} We distinguish two cases.

{\it Case 1:} There exists $r \in (1/2,1)$ such that $u^*(\pm r,0) \ne 0$. Let
$$\mathcal R:=[-r,r] \times [-\delta, \delta]$$
be a rectangle with $\delta$ small such that on each lateral side of $\mathcal R$ we have either $\{ u^*>p_0\}$ or $\{ u^*< p_0\}$. Denote by $T^+$ and $T^-$ the top and bottom sides of $\mathcal R$,
$$T^+ =[-r,r] \times \{\delta \}, \quad \quad T^- =[-r,r] \times \{-\delta \}.$$
We remark that, since $u^*$ and $p_0$ cross transversally away from the line $x_2=0$, there are only a finite number of components of $\{u^*<p_0\} \cap B_1$ which intersect $\p \mathcal R$.

\

{\it Claim:} Each connected component $\mathcal U$ of $\{u^*<p_0 \} \cap \mathcal R$ must intersect $T^+$.

\

If $\mathcal U$ intersects a lateral side of $\mathcal R$ then it intersects $T^+$ as well. 
Assume that $\mathcal U$ intersects only the bottom boundary $T^-$ of $\mathcal R$ (as in the picture below, Figure \ref{fig2}). 

\begin{figure}[h]
\includegraphics[width=0.7 \textwidth]{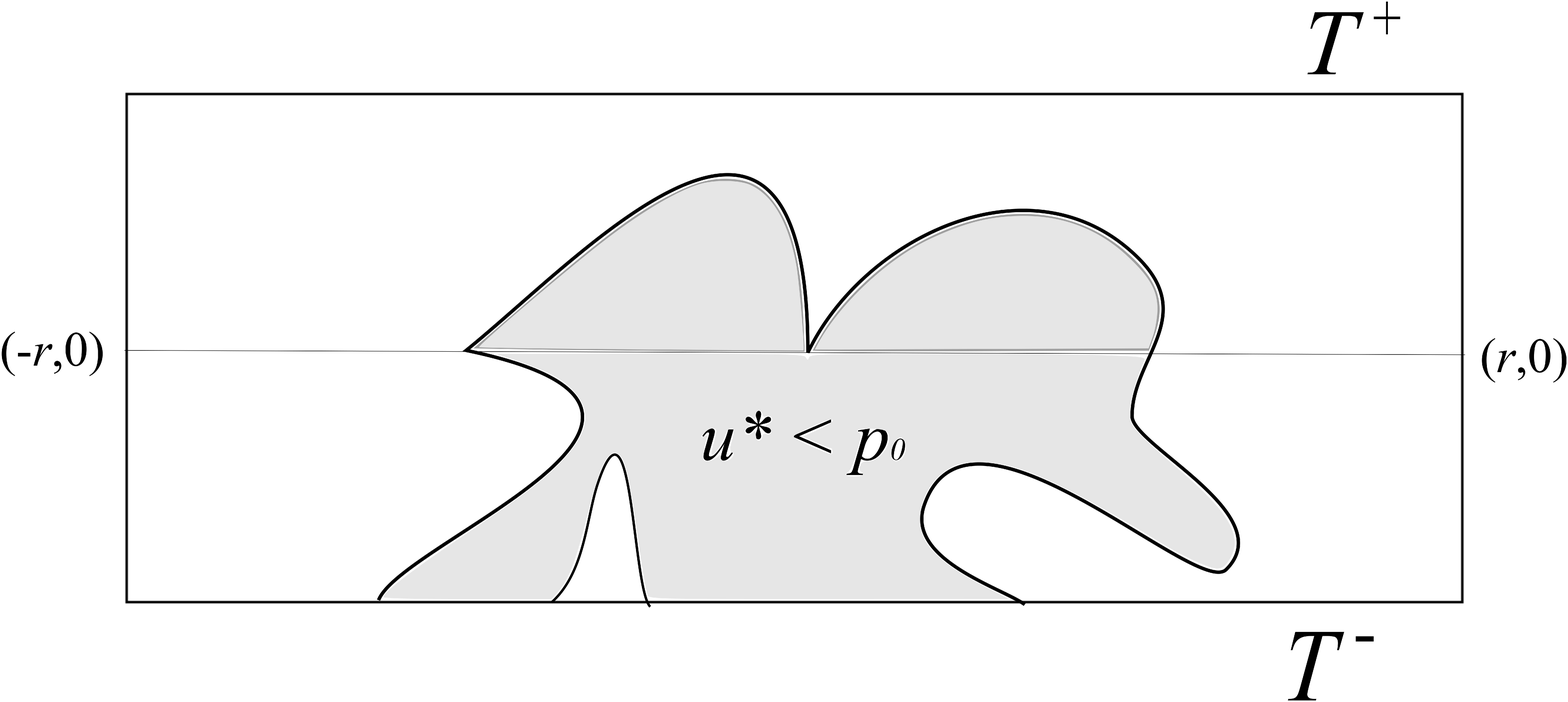}
\caption{}
    \label{fig2}
\end{figure}

Consider the family of linear functions $$\ell_s:=(1+ s) x_2 - 2s, \quad \quad \mbox{ for $s>0$,}$$ and notice that in $B_1$we have $p_0>\ell_s$ for all $s>0$ and $u^*> l_s$ for $s$ large. We decrease $s$ till the graph of $l_s$ touches the graph of $u^*$ by below at a point $(x^*, u^*(x^*)).$

If $x^* \in T^-$ then we find $\nabla u^*(x^*)=te_2$ for some $t \ge 1+s >1$, and we contradict that $b=G(1,e_2)=1$, with $b$ defined in \eqref{b}. 
 
Similarly,  if $x^* \in \{x_2 > -\delta\}$ and $u^*(x^*) \ne 0$ then $\nabla u^*(x^*)=(1+s)e_2$, again a contradiction. 
 
 If $u^*(x^*)=0$ then $(u^*)^+$ has a linear expansion near $x^*$ 
$$(u^*)^+(x)=t(x_2-x_2^*)^+ +o(|x-x^*|), \quad \quad \mbox{for some $t\ge 1+s$}.$$
 This implies $t e_2 \in \overline {D^+}$ and we reach a contradiction as above, and the claim is proved.

 Analogously, we obtain that each component of $\{u^*>p_0\} \cap \mathcal R$ must intersect $T^-$.

Now let us assume by contradiction that two distinct components $\mathcal U_1$ and $\mathcal U_2$ 
of $\{u^*<p_0\} \cap \mathcal R$ intersect any neighborhood of the origin i.e. 
\be\label{U1U2}
0 \in \p \mathcal U_1 \cap \p \mathcal U_2.
\ee 
From the claim we find points $y_i \in \mathcal U_i \cap T^+$. Let $z$ be a point on the segment $[y_1,y_2]$ which belongs to the set $\{u^* = p_0  \}$. Since $u^*$ and $p_0$ cross transversally and $\nabla u^*(z) \ne t e_2$ with $t \ge 1$ we see that $z \in \partial \mathcal V$ for some component $\mathcal V$ of $\{u^* >p_0\} \cap \mathcal R$. Using the claim again, we can find a non self-intersecting polygonal line which connects $z$ with the bottom $T^-$ and which is included in $\mathcal V$ except at the initial point $z$. This polygonal line splits the rectangle $\mathcal R$ into two separate regions one containing $\mathcal U_1$ and the other $\mathcal U_2$ and we contradict \eqref{U1U2}.

 \

 {\it Case 2:}  $u^*=0$ on a segment $\ell \subset \{x_2=0\}$.
 
 We show that there exists a point $x_0 \in \ell$ and $\delta>0$ small such that
\be\label{st2} 
u^*< p_0 \quad \mbox {in $B^+_\delta(x_0)$}, \quad \quad u^*> p_0 \quad \mbox {in $B^-_\delta(x_0)$}.
\ee
 Once we establish this, the arguments from {\it Case 1} carry through as before. Indeed if \eqref{st2} holds, we can construct a rectangle $\mathcal R$ for which the claim holds since we can guarantee that if $\mathcal U$ intersects a lateral side of $\mathcal R$ then it intersects $T^+$ as well.
 
We show only the first inequality in \eqref{st2} since the second follows in the same way. 
The function $(u^*)^+$ has an expansion in $\{x_2>0\}$ near a point $y \in \ell$ as
 $$(u^*)^+=t x_2^+ +o(|x-y|).$$
 
 If $t<1$ then we can compare $u^*$ with its {\it harmonic replacement} in $B_\eta^+(y)$ for some small $\eta$ and we easily obtain that $(u^*)^+ \le \frac{t+1}{2}x_2<(1-\eps)p_0$ in $B_{\eta/2}^+(y)$.
 
 If $t \ge 1$ then as in the proof of Lemma \ref{uc} and find that $u^*>0$ in $B^+_\eta(y)$. 

As before, if $(u^*)^+_\nu <1$ at any point on $\ell \cap B_{\eta}(y)$, then we obtain the desired conclusion in a neighborhood of that point. If $(u^*)^+_\nu >1$ at any point on the same segment then we find $\tilde t e_2 \in \overline{D^+}$ for some $\tilde t>1$ and we reach a contradiction. Otherwise, $(u^*)^+_\nu =1$ on the whole segment, and by unique continuation we obtain that $u^*$ coincides with $x_2$ in $B_{\eta/4}^+(y)$, thus $e_2 \in D^+$, contradiction. 

In conclusion \eqref{st2} holds and Lemma \ref{l3} is proved.

\end{proof}

{\bf Step 2.} From the definitions of $u^*$, $D^\pm$ we easily obtain.

\begin{lem}\label{l4}
There exists a sequence of points $y_m \to 0$ and blow-up functions $u_m$ such that $D^2 u_m(y_m) \ne 0$ and $\nabla u_m(y_m) \to e_2$. This means that the two-plane solutions $p_m$ corresponding to $y_m$ satisfy $p_m \to p_0$.  
\end{lem}

Let $x_k$ be a point such that $u^*(x_k) \ne 0$, $\nabla u^*(x_k)$ is close to $e_2$ and $D^2u^*(x_k) \ne 0$. Since
$$u_m(x):= \frac {1}{r_m} u(r_m x)  \quad \quad r_m \to 0,$$
converge uniformly (in $C^2$) to $u^*$ in a neighborhood of $x_k$, we find that $y_m:=r_m x_k$ and $u_m$ for some large $m$ depending on $k$ has the required properties of the lemma.

\medskip

{\bf Step 3.} We apply the two components lemma \ref{l2} for $u_m$ at $y_m$ and recall that $u_m \to u^*$, $p_m \to p_0$, $y_m \to 0$. In view of Lemma \ref{l3} one of the two connected components of $\{u_m<p_m\}$ must concentrate as $m \to \infty$ near the set $\{u^*=p_0\}$ in a small ball $B_\delta$ around the origin. Next we want to apply Remark \ref{r1} and reach a contradiction. As in Lemma \ref{l3} we consider the two cases.

{\it Case 1}:  There exists $r \in (0,\delta)$ such that $u^*(\pm r,0) \ne 0$. Then we can find a point $z \in \p B_r \cap \{u^* =p_0 \}$ and a neighborhood of $z$ away from $\{x_2=0\}$ where we can apply Remark \ref{r1}. 

\smallskip

{\it Case 2}:  $\{u^*=p_0\}$ contains a segment on $x_2=0$ and this segment is the limit of a sequence of connected components of $\{u_m<p_m\}$. Then Remark \ref{r1} applies again at some interior point $z$ of this segment.
\end{proof}

\smallskip

{\it End of proof of Theorem $\ref{T2}.$}
In conclusion $e_2 \in D^+ \cup D^-$, and there exists $x_0$ such that $\nabla u^*(x_0)=e_2$. Since we are in 2D, in view of the open mapping Theorem and the definition \eqref{b} of $b=1$, we conclude that $\nabla u^* \equiv e_2$ in a neighborhood of $x_0.$
Hence $u^*$ is linear in such neighborhood and by Lemma \ref{uc} we conclude that $u^* \equiv p_0$. 

 \end{proof}
  
\section{Compactness arguments}

The purpose of this section is to obtain Theorem \ref{T3}. The key ingredient is Proposition \ref{p2} below. First, we observe that
by the method of \cite{DFS1,DFS2} (which applies also for two different operators) we have the following perturbation result which holds in any dimension.
  
\begin{prop}\label{p1}
Assume that $p=p_{0, \nu,a,b}$ is a two-plane solution and $G$ is $C^1$ in a neighborhood of $(b,\nu)$. There exists $\eps_0$ small depending only on $G$ such that if $u$ is a solution and
$$|u-p| \le \eps \quad \mbox{in $B_1$,}  $$
for some $\eps \le \eps_0$, then $F(u)$ is a $C^{1,\beta}$ graph in $B_{1/2}$ with $C^{1,\beta}$ norm bounded by $C \eps$.   
\end{prop}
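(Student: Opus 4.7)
The plan is to follow the improvement-of-flatness scheme of De Silva-Ferrari-Salsa \cite{DFS1,DFS2}, adapted to the case of two distinct operators. After rotating and translating, I may assume $\nu=e_n$ and $p = a x_n^+ - b x_n^-$ with $a=G(b,e_n)$. The goal is to prove an iterative statement: if $\|u-p\|_{L^\infty(B_1)}\le \eps$ for $\eps\le \eps_0$, then there exists a two-plane solution $p'=p_{0,\nu',a',b'}$ with $|\nu'-e_n|+|a'-a|+|b'-b|\le C\eps$ and
$$\|u-p'\|_{L^\infty(B_\rho)}\le \eps \rho^{1+\beta},$$
for a universal $\rho\in(0,1)$ and some $\beta>0$. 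Iterating at free boundary points gives the stated $C^{1,\beta}$ regularity of $F(u)$ with $C^{1,\beta}$ norm controlled by $C\eps$.

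First I would establish a partial Harnack inequality: if $u$ is trapped between the translations $p(\,\cdot\,-\eps e_n)$ and $p(\,\cdot\,+\eps e_n)$ in $B_1$, then in a smaller ball one of the sides can be improved by a definite factor $(1-c)$. The proof uses the classical Harnack inequality for $L_1$ on $\{u>0\}$ and $L_2$ on $\{u<0\}$ together with a barrier crossing the free boundary. The admissibility of this crossing barrier uses the strict monotonicity of $G(\cdot,\nu)$ to ensure that a propagation of non-degeneracy on the negative side transfers to the positive side via the free boundary condition. This step is the main technical obstacle, since one must handle two different operators and a general $G$; the argument is the two-operator analog of the one in \cite{DFS1,DFS2}.

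Iterating the partial Harnack inequality yields uniform H\"older regularity of the normalized linearizations $\tilde u_\eps:=(u-p)/\eps$. Taking $\eps=\eps_k\to 0$ along a sequence of solutions $u_k$, a standard compactness argument produces a limit $\tilde u$ defined on $B_{1/2}$. The linearization step identifies the problem solved by $\tilde u$: on each side of $\{x_n=0\}$, $\tilde u$ solves the constant-coefficient operator $L_i$, and since $G$ is $C^1$ at $(b,e_n)$, the free boundary condition $a'=G(b',\nu')$ linearizes to a transmission condition on $\{x_n=0\}$ of the form
$$a\,\tilde u^+_{x_n} - G_b(b,e_n)\,b\,\tilde u^-_{x_n} = \nabla_\nu G(b,e_n)\cdot \nabla_{x'} \tilde u \text{ at } x_n=0,$$
where $\tilde u^\pm$ denote the restrictions to $\{\pm x_n>0\}$ and the tangential gradient of $\tilde u$ is continuous across the interface.

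Finally, I would invoke the $C^{1,\alpha}$ regularity theory for this linear transmission problem with two different constant-coefficient elliptic operators (as developed, e.g., in \cite{DFS1,DFS2,AM}). Classical Schauder-type estimates give a linear function $L$ with $\|\tilde u - L\|_{L^\infty(B_\rho)}\le C\rho^{1+\beta}$; the slope and intercept of $L$ determine the slight rotation $\nu'$ and updated slopes $a',b'$ of the new two-plane solution $p'$. Reversing the scaling yields the desired improvement-of-flatness statement, and iterating at every free boundary point in $B_{1/2}$ concludes the proof. The delicate point throughout is the partial Harnack step for two different operators, whereas the remaining linearization and iteration are a rather standard execution of the DFS scheme.
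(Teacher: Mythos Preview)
Your proposal is correct and follows essentially the same approach as the paper: the authors likewise invoke the improvement-of-flatness scheme of \cite{DFS1,DFS2} (noting it applies with two different operators), identify the linearized problem as a transmission problem across $\{x\cdot\nu=0\}$ with exactly the jump condition you derive, and then cite the regularity theory for that transmission problem. Your write-up is in fact more detailed than the paper's, which simply records the linearized transmission condition and refers to \cite{DFS2} and \cite{AM} for the remaining steps.
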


Indeed, the linearization becomes a transmission problem for different constant coefficients operators across the plane $x \cdot \nu=0$ with the jump condition
\be\label{jump}G(b, \nu) v_\nu^+=v_\nu^- \,  G_1(b,\nu) b + v_\tau \cdot G_{\nu} (b,\nu).\ee

This is obtained formally by expanding in $\eps$  the free boundary condition for the perturbed solution (say $0\in F(u), \nu(0)=e_n$),
$$u= a(x_n + \eps v)^+ - b(x_n + \eps v)^-, \quad a=G(b,e_n).$$ This leads to the jump condition \eqref{jump}.

This linear problem is invariant under translations along directions perpendicular to $\nu$, and therefore solutions are smooth in these directions. A viscosity approach to this type of transmission problem has been developed in \cite{DFS2} (see Theorem 3.2 and the main Proposition 3.5.) In that context $G$ did not depend on $\nu$. However, all arguments are easily adapted to the case when $G$ depends also on $\nu$ (see also \cite{AM}.) 
%In a work in progress \cite{DFS3}, the authors strengthen their conclusion to prove that $F(u)$ is in fact analytic. 

Next we assume that $G$ is homogenous of degree 1 in the $b$ variable. In view of Theorem \ref{T2} and Proposition \ref{p1} we obtain
\begin{prop}\label{p2}
Assume that $u$ is a solution in $B_1 \subset \R^2$ and $u(0)=0$, $|u| \le 1$. Then there exists a two-plane solution $p$ (here we include also $p \equiv 0$) such that
$$|u-p| \le C_0 |x|^{1+\alpha}$$
for some $C_0$ depending only on $G$.
\end{prop}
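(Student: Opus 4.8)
I want to prove Proposition \ref{p2} by the standard improvement-of-flatness / compactness dichotomy, using Theorem \ref{T2} as the key classification input. The statement is a "pointwise $C^{1,\alpha}$ at the origin" estimate, so the natural approach is to show that at dyadic scales $r_k = 2^{-k}$ the rescaled solutions $u_k(x) := u(r_k x)/r_k$ stay within $\eps_0$ (the constant from Proposition \ref{p1}) of \emph{some} two-plane solution $p_k$ — or of $p\equiv 0$ — and that once this happens at one scale the perturbation result of Proposition \ref{p1} gives geometric decay of the deviation thereafter. The whole difficulty is concentrated in an initial compactness step: showing that there is a smallest scale $r_0$ (universal) such that $\|u - p\|_{L^\infty(B_{r_0})} \le \eps_0 r_0$ for some two-plane solution $p$ (possibly zero). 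Everything downstream is an iteration.

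\textbf{Step 1: Compactness to a blow-up.} Suppose the initial closeness fails at all scales; then there is a sequence $r_k\to 0$ and solutions $u_k(x)=u(r_kx)/r_k$ with $\|u_k - q\|_{L^\infty(B_1)} > \eps_0$ for every two-plane solution $q$ and for $q\equiv 0$. By Theorem \ref{TLip} the $u_k$ are uniformly Lipschitz on $B_{1/2}$ (the Lipschitz bound is universal because $|u|\le 1$, $u(0)=0$, and by scaling one passes to a fixed ball), so along a subsequence $u_k \to u^*$ uniformly on compacts, where $u^*$ is a global solution of \eqref{FBC} with constant-coefficient operators (the coefficients $A_i(r_k x)\to A_i(0)$), and $0\in F(u^*)$. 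Apply Theorem \ref{T2}: either $u^*$ is a two-plane solution $p_{0,\nu,a,b}$ with $a=G(b,\nu)$, or $(u^*)^-\equiv 0$, i.e. $u^*$ solves the one-phase problem for $L_1$. In the first case $u_k \to u^*$ uniformly on $\bar B_{1/2}$ gives $\|u_k - u^*\|_{L^\infty(B_{1/2})} \le \eps_0$ for $k$ large, a contradiction (after a harmless rescaling to replace $B_{1/2}$ by $B_1$). In the second case I need to push harder.

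\textbf{Step 2: The one-phase case.} If $(u^*)^-\equiv 0$ then $u^*\ge 0$ is a global nonnegative solution of $L_1 u^*=0$ in $\{u^*>0\}$, Lipschitz, with $0\in F(u^*)$. Here I exploit dimension $2$ and homogeneity degree one of $G$ together with the viscosity free boundary condition: a global Lipschitz one-phase solution in $\R^2$ vanishing at $0$ must be either $p\equiv 0$ or a half-plane solution $a\,(x\cdot\nu)^+$ (with $a = G(0,\nu)$; by homogeneity degree one of $G$ in $b$ one has $G(0,\nu)=0$, so actually $u^* \equiv 0$). Concretely: by Theorem \ref{TLip} applied to $u^*$ on large balls, $u^*$ has at most linear growth, and the classical one-phase regularity/classification in $2$D (or a direct argument via Lemma \ref{l2}-type topology applied to $u^*$ and the half-plane comparison functions) forces $u^* = a(x\cdot\nu)^+$; the free boundary condition "when $u_\nu^->0$" is vacuous here, but the definition still requires $a \le G(0,\nu)$ from the supersolution side, and $a\ge \limsup$ of slopes; with $G(0,\nu)=0$ by homogeneity we get $u^*\equiv 0$. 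Thus in all cases $u_k$ is within $\eps_0$ of a two-plane solution or of $0$, contradiction. This establishes: there is a universal scale $r_0$ and a two-plane solution $p^{(0)}$ (or $p^{(0)}\equiv 0$) with $\|u - p^{(0)}\|_{L^\infty(B_{r_0})}\le \eps_0 r_0$.

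\textbf{Step 3: Iteration.} Rescale $\tilde u(x) = u(r_0 x)/r_0$, which is within $\eps_0$ of $\tilde p$ on $B_1$. If $\tilde p\not\equiv 0$, Proposition \ref{p1} applies: $F(\tilde u)$ is a $C^{1,\beta}$ graph in $B_{1/2}$ with norm $\le C\eps_0$, and standard elliptic estimates on each phase give $\|\tilde u - p'\|_{L^\infty(B_{1/2})}\le C\eps_0 \cdot \frac12$ for a new two-plane solution $p'$ (flat $C^{1,\beta}$ free boundary plus interior $C^{1,\alpha}$ bounds improve the linear approximation by a fixed factor); iterating yields $\|u - p^{(k)}\|_{L^\infty(B_{2^{-k}r_0})}\le C 2^{-(1+\alpha)k}$, with the two-plane solutions $p^{(k)}$ Cauchy (their slopes and normals converge geometrically, using that $G$ is continuous and the jump condition \eqref{jump} controls the normal's motion), hence $|u-p|\le C_0|x|^{1+\alpha}$ near $0$ for the limiting $p$. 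If instead $\tilde p\equiv 0$ at some scale, then $|\tilde u|\le\eps_0$ on $B_1$ with $\tilde u(0)=0$; here I iterate using the one-phase/two-phase nondegeneracy — either $u$ stays $\eps_0$-close to $0$ at all smaller scales (then $p\equiv 0$ and the estimate is trivial with $C_0$ large, using $|u|\le 1$), or at some scale it becomes $\eps_0$-close to a genuine two-plane solution and we are back in the previous case. A covering/continuity argument ensures the transition happens cleanly.

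\textbf{Main obstacle.} The delicate point is Step 2, the one-phase alternative: showing that a global Lipschitz one-phase solution in $\R^2$ through the origin is necessarily trivial (given $G$ homogeneous of degree one, so $G(0,\cdot)=0$). This is exactly where $2$D and the topological arguments are indispensable — one cannot invoke a monotonicity formula for two different operators. The cleanest route is to re-run the graph-intersection machinery of Theorem \ref{T2}'s proof (Lemmas \ref{l2}, \ref{l3} and the open mapping theorem) but now comparing $u^*$ with the one-parameter family of half-plane solutions $t(x\cdot e_2)^+$ and linear functions, concluding that the maximal achieved slope is attained in the interior of $\{u^*>0\}$, forcing $u^*$ to be linear there and hence, by the free boundary condition and $G(0,e_2)=0$, identically zero. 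A secondary technical nuisance is making the iteration in Step 3 bookkeep the two-plane solutions' parameters so they form a Cauchy sequence with the right rate; this is routine given Proposition \ref{p1} but must be stated carefully.
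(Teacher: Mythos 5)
Your overall skeleton (compactness at small scales, classification of the limit via Theorem \ref{T2}, then iteration through Proposition \ref{p1}) is the paper's strategy, but there is a genuine gap in your treatment of the degenerate branch. In Step 3 you claim that if $u$ stays $\eps_0$-close to $0$ at every scale then ``$p\equiv 0$ and the estimate is trivial with $C_0$ large, using $|u|\le 1$.'' This is false: closeness to zero at every scale means at best $\|u\|_{L^\infty(B_r)}\le \eps_0 r$ for all $r$, i.e.\ a Lipschitz bound with small constant, which does \emph{not} imply $|u|\le C_0|x|^{1+\alpha}$ near the origin (where $|x|^{1+\alpha}\ll |x|$). The conclusion with $p\equiv 0$ is precisely a super-linear decay statement and requires a quantitative gain at each scale. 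The paper's mechanism for this is twofold: (i) the dichotomy is stated so that the degenerate alternative reads $|u|\le \tfrac12\, r_0$ in $B_{r_0}$ with $r_0\ge \rho$ and $\alpha$ chosen so that $\tfrac12\le\rho^{\alpha}$ --- the factor $\tfrac12$ is a definite improvement that compounds to $(r_1\cdots r_k)^{1+\alpha}$ after $k$ steps; and (ii) the iteration uses the rescaling $\tilde u:=r^{-(1+\alpha)}u(rx)$, which is again a solution of \eqref{FBC} \emph{only because} $G$ is homogeneous of degree one in $b$ (the equations $L_i$ tolerate any multiplicative rescaling, but the free boundary condition does not unless $G$ is $1$-homogeneous). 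Your proposal never uses the homogeneity of $G$ in the iteration, which is a sign that the degenerate case cannot close as written.

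Two secondary points. First, your compactness in Step 1 is run over blow-ups of a single fixed $u$; to iterate you need the improvement claim to hold, with a \emph{universal} lower bound $\rho$ on the good scale, for every solution in the class (in particular for all the rescalings $\tilde u$), so the contradiction argument must range over a sequence of different solutions $u_k$, as in the paper, not over $u(r_kx)/r_k$ for one $u$. Without a uniform $\rho$ the product of scales $r_1r_2\cdots$ need not tend to zero and the iteration does not reach the origin. Second, your Step 2 is more elaborate than necessary: the paper disposes of the one-phase alternative by the observation (made before Theorem \ref{T3}) that degree-one homogeneity forces $G(0,\nu)=0$, so alternative \eqref{1p} yields $u^*\equiv 0$, which is then absorbed into the degenerate branch of the dichotomy rather than requiring a separate classification of global one-phase solutions. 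Your instinct that this is the delicate spot is reasonable, but the real delicate spot is the one identified above.
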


\begin{proof}
We know that $u$ is Lipschitz in $B_{1/2}$, i.e. $|\nabla u| \le C$, and we pick $\eps_0$ sufficiently small such that Proposition \ref{p1} applies with this $\eps_0$ for all planes $p$ with either $a$ or $b$ in $[\frac14,C]$. 

{\it Claim:} There exists $\rho\in (0, \frac 12)$ depending only on $G$ such that in some ball $B_{r_0}$ with
$r_0 \in (\rho,\frac 12)$ depending also on $u$ we have 
$$\mbox{either} \quad  |u-p| \le \eps_0 r_0 \quad \quad \mbox{or} \quad |u| \le \frac 12 \,  r_0,$$
for some two-plane solution $p_{a,b}$ with either $a$ or $b$ in $[\frac14,C]$.

\

Indeed if this property does not hold for a sequence of $\rho_k \to 0$ and functions $u_k$, then we can extract a subsequence which converges uniformly to a limiting solution $u_\infty$. By Theorem \ref{T2} we have
$$u_\infty=p_{a,b} + o(|x|).$$
If either $a$ or $b$ are greater than $\frac 14$ then we contradict the first alternative for some $k$ large, and if both $a$, $b$ are less than $1/4$ then we contradict the second alternative, and the claim is proved.

\

We choose $\alpha>0$ small such that $1/2 \le \rho^\alpha$ and $\alpha \le \beta$ with $\beta$ given by Proposition \ref{p1}. Notice that if the first alternative holds then the conclusion of Proposition \ref{p2} is clearly satisfied for some large $C_0$. If the second alternative holds then the rescaling
$$\tilde u:=r^{-(1+\alpha)}u(rx), \quad \quad r=r_0,$$
is a solution in $B_1$ and $|\tilde u|\le 1$. Now we apply the claim to $\tilde u$ and repeat this process either

\

1) a finite number of times with $r=r_k$, $r_{k+1}/r_k \in (\rho,\frac 12)$ and stop the first time we end up with the first alternative for $\tilde u$. Then $$|u-p_{\nu,a,b}| \le C_0|x|^{1+\alpha} \quad \mbox{in $B_r$}, \quad \quad |a|,|b| \le C r^{\alpha},$$
and $|u|,|p|$ are bounded by $C|x|^{1+\alpha}$ outside $B_r$ and the conclusion follows.

\

2) an infinite number of times and $\tilde u$ satisfies the second alternative indefinitely. In this case we clearly satisfy Proposition \ref{p2} with $p \equiv 0$.

\end{proof}
 
\begin{rem}\label{r2}
Proposition \ref{p2} implies that for any point $x_0 \in B_{1/4}$ we have
\be\label{300}
 |u-p_{x_0}| \le C |x-x_0|^{1+\alpha}
\ee
where $p_{x_0}$ is the two-plane solution at $x_0$, and $C$ is a universal constant.

Indeed, let $r$ be the distance from $x_0$ to $\{u=0\}$ and $z$ a point where the distance is realized. Denote by $p_z$ the two plane solution at $z$ given by Proposition \ref{p2}, hence $|u-p_z| \le Cr^{1+\alpha}$ in $B_r(x_0)$. This gives $|p_{x_0}-p_z| \le C r^{1+\alpha}$ in $B_r(x_0)$ which implies
$$|p_{x_0}-p_z| \le  C r^\alpha |x-x_0| \quad \mbox{outside $B_r(x_0)$,}$$
and the claim easily follows.
\end{rem}

\section{Reduction to a two-phase free boundary problem}

In this section we show that problem \eqref{BE} can be reduced to a two-phase problem of the form \eqref{TP}, for a specific $G$ and we finally obtain our main Theorem \ref{T0}.

Without loss of generality we may assume that 
$$L_1v=\triangle v \quad \mbox{ and} \quad L_2 v =v_{11}+ m \, \,  v_{22}, \quad \mbox{for some $m \ge 1$.}$$

We establish the following result.

\begin{prop}\label{P1}
The function $$u= v_{22}$$ satisfies the two-phase free boundary problem
$$L_1u=0 \quad \mbox{in} \quad \{u>0 \} , \quad \quad  L_2u=0 \quad \mbox{in} \quad \{u<0 \}, $$
$$u_\nu^+=(1+ (m-1)\nu_2^2) \,  u_{\nu}^-  \quad \mbox{on $\Gamma$}.$$
\end{prop}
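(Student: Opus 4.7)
The plan begins by identifying the phases. Since $m \ge 1$, the difference $L_2 v - L_1 v = (m-1) v_{22} = (m-1) u$ has the sign of $u$, so the Bellman equation $\min\{L_1 v, L_2 v\} = 0$ forces $\Delta v = 0$ on $\{u > 0\}$ and $v_{11} + m v_{22} = 0$ on $\{u < 0\}$, while $\Gamma = \{u = 0\}$. (The case $m = 1$ is trivial, so assume $m > 1$.) Interior constant-coefficient regularity gives $v \in C^\infty$ on each open phase, so differentiating $\Delta v = 0$ twice in $x_2$ yields $\Delta u = L_1 u = 0$ in $\{u > 0\}$, and differentiating $v_{11} + m v_{22} = 0$ twice in $x_2$ yields $u_{11} + m u_{22} = L_2 u = 0$ in $\{u < 0\}$.

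For the free boundary condition, the key observation is that $v \in C^{2,\alpha}$ by Evans--Krylov, so the mixed derivative $v_{12}$ is H\"older continuous across $\Gamma$. At a free boundary point admitting a unit normal $\nu$ pointing into $\{u > 0\}$, with one-sided positive normal derivatives $u_\nu^\pm$, I would compute $\nabla v_{12}$ from each phase. On $\{u > 0\}$, the identity $v_{11} = -v_{22} = -u$ gives $v_{112} = -u_2$ and $v_{122} = u_1$. On $\{u < 0\}$, the identity $v_{11} = -m v_{22} = -m u$ gives $v_{112} = -m u_2$ and $v_{122} = u_1$. Since $u \equiv 0$ along $\Gamma$, its gradient has no tangential component, so $\nabla u|_\pm = u_\nu^\pm \nu$. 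Equating the tangential derivatives $\nabla v_{12}|_+ \cdot \tau = \nabla v_{12}|_- \cdot \tau$ with $\tau = (-\nu_2, \nu_1)$, which holds because $v_{12}$ is continuous with one-sided $C^1$ extensions from each phase, produces
\[
u_\nu^+ (\nu_1^2 + \nu_2^2) = u_\nu^- (\nu_1^2 + m \nu_2^2),
\]
i.e.\ $u_\nu^+ = (1 + (m-1)\nu_2^2)\, u_\nu^-$, as claimed.

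The main obstacle is to justify these computations within the viscosity framework of Section 2, since Evans--Krylov yields only $C^{2,\alpha}$ regularity of $v$ and no a priori regularity of $\Gamma$ beyond closedness. One route is to show that if a two-plane solution $a\psi^+ - b\psi^-$ touches $u = v_{22}$ from below at $y_0 \in \Gamma$ with $a > (1 + (m-1)\nu_2^2)\, b$, one can lift this to a comparison for $v$ itself---morally obtained by integrating twice in $x_2$---and derive a contradiction from the Bellman equation together with the classical matching above. An alternative is to exploit the H\"older continuity of $v_{12}$ to ensure well-defined tangential traces on $\Gamma$, verifying the free boundary condition pointwise at every free boundary point where one-sided normal derivatives exist, which is sufficient for the applications of Theorem \ref{T3} used later in Section 4.
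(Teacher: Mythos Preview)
Your derivation of the interior equations is correct, and your formal computation of the free boundary condition via the continuity of $v_{12}$ is elegant and gives the right answer. However, the proof has a genuine gap precisely where you locate it: the verification of the free boundary condition in the viscosity sense of Section~2 is not carried out, and neither of your two suggested routes is obviously workable as stated. Route (2) presupposes that $v_{12}$ has one-sided $C^1$ extensions up to $\Gamma$ from each phase and that $\Gamma$ is regular enough to admit a tangent direction; but a priori Evans--Krylov gives only $v \in C^{2,\alpha}$, $\Gamma$ is merely closed, and there is no reason the third derivatives of $v$ extend continuously to $\Gamma$ from either side. Route (1), ``integrating twice in $x_2$,'' is too vague: a test function $a\psi^+ - b\psi^-$ touching $u=v_{22}$ at a single point does not in any evident way produce a useful barrier for $v$ itself. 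So what remains is a heuristic derivation, correct at regular free boundary points but not a proof of the viscosity condition.

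The paper takes a genuinely different and more robust route: it regularizes the operator as $F_\eps(D^2 v)=v_{11}+h_\eps(v_{22})$ with $h_\eps$ a smooth concave approximation of $s^+ - m s^-$, so that $u_\eps=(v_\eps)_{22}$ satisfies a smooth semilinear equation. For this smooth problem one can construct explicit one-dimensional profiles $g_\eps(x\cdot\nu)$ solving the equation for $u_\eps$; passing to the limit $\eps\to 0$ these converge uniformly to the two-plane functions $a(x\cdot\nu)^+ - b(x\cdot\nu)^-$ with exactly the jump $a=(\nu_1^2+m\nu_2^2)b$. A further perturbation (replacing the hyperplane by a large sphere and adding a small quadratic correction) yields strict classical sub- and supersolutions with curved level sets, and these survive the limit to give the comparison functions required by the viscosity definition. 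This approximation approach bypasses the need for any a priori regularity of $\Gamma$ or of third derivatives of $v$, which is exactly what your direct argument lacks.
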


% The free boundary condition is understood in the viscosity sense: at any point on $\Gamma$ we cannot touch $u$ by above (or below) by a {\it regular subsolution} (supersolution).
The free boundary condition can be easily deduced when the free boundary $\Gamma \in C^1$. Indeed let $w_1:= L_2 v=(m-1)u^+$, $w_2:=L_1 v=(m-1)u^-$. If $\nu$ denotes the normal to $\Gamma$ then
$$L_1L_2 v = L_i w_i = (w_i)_\nu^+ A_i(\nu,\nu)  \, \, d \mathcal H^{1}|_{\Gamma}.$$  
This computation holds in any dimension and gives the free boundary condition 
$$(w_1)^+_\nu A_1(\nu, \nu) = (w_2)^+_\nu A_2(\nu,\nu) \quad \text{on $\Gamma.$}$$ In fact the proof below of Proposition \ref{P1} applies in any dimension.

We prove Proposition \ref{P1} by approximating $F$ by $C^2$ operators $$F_\eps(D^2u)= u_{11}+h_\eps (u_{22})$$ with $h_\eps(s)$ a smoothing of the Lipschitz function 
$$h_0(s)= s^+ - m \, \, s^-.$$

\begin{proof}

We differentiate twice in the $e_2$ direction and obtain that a solution to $F_\eps(D^2v)=0$ satisfies
$$ \triangle v_{22} + h_\eps'(v_{22})v_{2222} + h_\eps''(v_{22})v_{222}^2=0,$$
hence $u=v_{22}$ satisfies
\be\label{30}
 \triangle u+h_\eps'(u)u_{22} +h_\eps''(u)u_2^2=0.
\ee

By $C^{2,\alpha}$ estimates, the solutions $u$ above are uniformly H\"older, hence converge uniformly to $\bar u$ as $\eps \to 0$. In order to find the 
equation for $\bar u$ we need to find a family of solutions of \eqref{30} (or subsolutions  and 
supersolutions) that converge with $\eps$.

First we look for one-dimensional functions $g_\eps (x \cdot \nu)$ which solve \eqref{30}. For simplicity of notation we drop the subindex from $g$, $h$ and we obtain 
$$\left [1+\nu_2^2\, h'(g) \right] \, g'' + \nu_2^2 \,  h''(g) \, \, g'^2=0,$$
or
$$ \left( [1+\nu_2^2 \, h'(g) ] g' \right)'=0,$$
hence $$\frac {g'(t_2)}{g'(t_1)} = \frac{1+\nu_2^2 \, h'(g(t_1))}{1+\nu_2^2 \, h'(g(t_2))} $$
which means that the derivative of $g$ jumps by a factor of $\nu_1^2+m \nu_2^2$ after passing through $0$. Thus any two-plane function
$$a (x \cdot \nu)^+-b(x \cdot \nu)^-, \quad \quad  \mbox{with} \quad a=(\nu_1^2 +m \nu_2^2) b \ge 0,$$ is the uniform limit of solutions $g_\eps$ and therefore it is a comparison function for $\bar v$.

Now we can slightly modify the comparison function above and obtain a perturbed family for which the free boundary is a large sphere instead of a hyperplane. Consider functions $g_\eps(d)$ where $d$ is the signed distance to a sphere of radius $\delta^{-3}$ passing through the origin and with inner normal $\nu$, with $d>0$ inside the sphere. We let $g(t)$ such that it satisfies the ODE
$$[1+ (\nu_2^2+\delta)h'(g)] g'=1+ \delta t, \quad g(0)=0.$$
Using that $h$ is concave and $h(0)=0$ for $t>0$ we obtain that $g''>c\delta$, $g'<C$ and one can easily see as in the computation above that $g(d)$ is a subsolution in $B_1$.

As $\eps \to 0$, $g_\eps$ converges uniformly to 
$$\bar g(t):= \left(t+ \frac \delta 2 t^2 \right)^+ - b \left(t+ \frac \delta 2 t^2 \right )^- \quad \mbox{with} \quad b=\frac{1}{\nu_1^2 + m \nu_2^2} +O(\delta),$$
and $\bar g$ is a comparison subsolution for $\bar v$. We obtain the desired conclusion since any comparison subsolution $a \varphi^+-b\varphi^-$ with $\nabla \varphi(0)=\nu$ and $a>G(b, \nu)$ appearing in the viscosity definition of \eqref{FBC} can be touched strictly by below at the origin by a multiple and a rescaling of $\bar g(d)$ (with $\delta$ sufficiently small).
\end{proof}

The proof of our main Theorem \ref{T0} now follows immediately.

\smallskip

\noindent {\it Proof of Theorem $\ref{T0}$}
From Proposition \ref{p1} and Remark \ref{p2} we know that $u:=v_{22}$ satisfies \eqref{300}. Then $$L_2 v=(m-1)u^+=:w$$
and for each $x_0$ in $B_{1/4}$ there exists a point $z \in \{ u=0\}$ such that 
$$ |w- \sigma ((x-z) \cdot \nu)^+ | \le C |x-x_0|^{1+\alpha}, \quad \mbox{for some $\sigma \ge 0$.} $$
Let $q=\gamma[(x-z)  \cdot \nu)^+]^3$ be such that $L_2 q=\sigma ((x-z) \cdot \nu)^+$ and then the right hand side of $L_2(v-q)$ is pointwise $C^{1,\alpha}$ at $x_0$. By the pointwise Schauder estimates for linear equations we obtain that $v-q$ is pointwise $C^{3+\alpha}$ at $x_0$.

\qed

\end{document}